\newtheorem{theorem}{Theorem}[section]
\newtheorem{corollary}[theorem]{Corollary}
\newtheorem{proposition}[theorem]{Proposition}
 \theoremstyle{definition}
 \newtheorem{definition}[theorem]{Definition}
 \newtheorem{remark}[theorem]{Remark}
 \newtheorem{example}[theorem]{Example}
\numberwithin{equation}{section}
\newcommand {\N}{\mathbb{N}} 
\newcommand {\Z}{\mathbb{Z}} 
\newcommand {\R}{\mathbb{R}} 
\newcommand {\sph}{\mathbb{S}} 
\newcommand{\MM}{\mathcal{M}}
\newcommand{\PP}{\mathcal{P}}
\newcommand{\UU}{\mathcal{U}}
\DeclareMathOperator{\CSA}{CSA}
\DeclareMathOperator{\CMA}{CMA}
\DeclareMathOperator{\End}{End}
\DeclareMathOperator{\Sym}{Sym}
\DeclareMathOperator{\Id}{Id}
\DeclareMathOperator{\Aut}{Aut}
\DeclareMathOperator{\Map}{Map}
\begin{document}
\title[Topological stability]{Topological stability of semigroup actions and shadowing}
\author[T.Ceccherini-Silberstein]{Tullio Ceccherini-Silberstein}
\address{Dipartimento di Ingegneria, Universit\`a del Sannio, I-82100 Benevento, Italy}
\address{Istituto Nazionale di Alta Matematica ``Francesco Severi'', I-00185 Rome, Italy}
\email{tullio.cs@sbai.uniroma1.it}
\author[M.Coornaert]{Michel Coornaert}
\address{Universit\'e de Strasbourg, CNRS, IRMA UMR 7501, F-67000 Strasbourg, France}
\email{michel.coornaert@math.unistra.fr}
\author[X.K.Phung]{Xuan Kien Phung}
\address{D\'epartement d'Informatique et de Recherche Op\'erationnelle, Universit\'e de Montr\'eal, Montr\'eal, Qu\'ebec, H3T 1J4, Canada}
\address{D\'epartement de Math\'ematiques et de Statistique, Universit\'e de Montr\'eal, Montr\'eal, Qu\'ebec, H3T 1J4, Canada}
\email{phungxuankien1@gmail.com}
\subjclass[2020]{20M30, 54H15, 37B05, 37D20, 37B10, 37B65, 37C20, 37B51}
\keywords{semigroup, monoid, dynamical system, expansivity, pseudo-orbit, shadowing, topological stability, symbolic dynamics, subshift of finite type} 
\begin{abstract}
We investigate expansiveness, topological stability, and shadowing for continuous actions of semigroups on compact Hausdorff spaces.
We characterize semigroups for which all full shifts are expansive.
We show that every expansive continuous monoid action on a compact Hausdorff space which has the shadowing property is topologically stable, and that a subshift with finite alphabet over a monoid has the shadowing property if and only if it is of finite type.
\end{abstract} 
\date{\today}
\maketitle

\setcounter{tocdepth}{1}
\tableofcontents

\section{Introduction}
The notion of \emph{topological stability} for homeomorphisms of compact metric spaces was introduced by Walters in~\cite{walters-anosov}.
Roughly speaking, a homeomorphism $f\colon X\to X$ of a compact metric space $X$ is topologically stable if for every homeomorphism $g\colon X \to X$ sufficiently close to $f$
in the $C^0$-topology,
there exists a continuous map $h \colon X \to X$, that can be taken to be arbitrarily close to the identity map of $X$, such that $h \circ g =f\circ h$.   
Walters proved  that every Anosov diffeomorphism of a compact manifold is topologically stable~\cite[Theorem~1]{walters-anosov}.
A homeomorphism has the \emph{pseudo-orbit tracing property}, or \emph{shadowing property}, when pseudo-orbits stay close to actual orbits
(cf.~\cite[Chapter~2]{aoki-hidaire-book}, \cite[Chapter~5]{brin-stuck}, \cite[Chapter~1]{nekrashevych}). 
The pseudo-orbit tracing property  has a long history in the theory of dynamical systems and  its importance  in the study of axiom A diffeomorphisms was stressed by Bowen \cite{bowen-omega-limit-sets}, ~\cite{bowen-lns-1978}.
Walters investigated the relationships between shadowing and topological stability  in~\cite{walters-potp-1978}.
He proved in particular that every expansive homeomorphism of a compact metric space with the pseudo-orbit tracing property is topologically stable~\cite[Theorem~4]{walters-potp-1978}. 
Recently, Walters' result  was  extended to continuous actions of finitely generated groups
by Chung and Lee~\cite[Theorem~2.8]{chung-lee} and   to continuous actions of countable groups by Meyerovitch~\cite[Theorem~2.4]{meyerovitch-2019}.
\par
In the present paper, we consider dynamical systems of the form $(X,S)$,  where $X$ is a compact Hausdorff space and 
$S$ is a semigroup acting continuously on $X$.
We do not require $X$ to be metrizable nor $S$ to be countable.
For   monoid actions, we shall establish the following extension of  the previously mentioned topological stability results.

\begin{theorem}
\label{t:main}
Let $X$ be a compact Hausdorff space equipped with a continuous action of a monoid $M$.
Suppose that the dynamical system $(X,M)$ is expansive and has the pseudo-orbit tracing property.
Then $(X,M)$ is topologically stable.
\end{theorem}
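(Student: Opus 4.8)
The plan is to transpose Walters' proof of \cite[Theorem~4]{walters-potp-1978} to the monoid setting, working throughout with the unique uniformity of the compact Hausdorff space $X$ in place of a metric and with $M$ in place of $\Z$. Fix an expansivity entourage $W_0$ for $(X,M)$, that is, an entourage with $\bigcap_{m\in M}\{(x,y): (mx,my)\in W_0\}=\Delta$, where $\Delta$ is the diagonal of $X\times X$. Given a target entourage $W$, I would first pick a closed symmetric entourage $E$ with $E\circ E\subseteq W_0\cap W$, and then invoke the pseudo-orbit tracing property to obtain an entourage $V$ such that every $V$-pseudo-orbit is $E$-shadowed by a genuine orbit. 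Let $g=(g_m)_{m\in M}$ be a continuous $M$-action close enough to the given one (in the $C^0$-sense prescribed by the definition of topological stability) that for each $x\in X$ the map $m\mapsto g_m(x)$ is a $V$-pseudo-orbit of $(X,M)$; this is exactly what the homomorphism relation $g_{sm}(x)=g_s(g_m(x))$ yields once $g_s$ is uniformly $V$-close to the transformation $y\mapsto sy$ on the step set defining pseudo-orbits.

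For each $x$ the tracing property gives a point whose orbit $E$-shadows $m\mapsto g_m(x)$, and expansivity makes this point \emph{unique}: if the orbits of $y$ and $y'$ both $E$-shadow the same pseudo-orbit, then $(my,my')\in E\circ E\subseteq W_0$ for all $m$, whence $y=y'$. Define $h(x)$ to be this unique point, so that $(m\,h(x),g_m(x))\in E$ for every $m\in M$. Evaluating at $m=1$ gives $(h(x),x)\in E\subseteq W$, so $h$ is $W$-close to the identity. Note that only the forward orbits $m\mapsto g_m(x)$, $m\in M$, are used, which is why invertibility plays no role and the monoid hypothesis suffices.

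Continuity of $h$ is where the absence of a metric demands the most care, and I would establish it through the closed graph theorem for maps into a compact Hausdorff space rather than by a metric $\varepsilon$-$\delta$ argument. With $E$ taken closed, suppose $(x_\lambda,h(x_\lambda))\to(x,z)$ along a net in $X\times X$. For each $m$ we have $(m\,h(x_\lambda),g_m(x_\lambda))\in E$; letting $\lambda$ run and using continuity of $y\mapsto my$ and of $g_m$ together with the closedness of $E$, we obtain $(mz,g_m(x))\in E$ for all $m$. Thus the orbit of $z$ $E$-shadows $m\mapsto g_m(x)$, so $z=h(x)$ by uniqueness. Hence the graph of $h$ is closed and $h$ is continuous.

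It remains to verify the intertwining $h\circ g_m=(m\,\cdot\,)\circ h$, which is the conceptual core but collapses to a one-line computation once one observes that the shadowing defining $h$ is compatible with right translation on $M$. Fix $m\in M$ and $x\in X$ and set $z=m\,h(x)$. For every $m'\in M$,
\[
m'z=(m'm)\,h(x),\qquad g_{m'}(g_m(x))=g_{m'm}(x),
\]
so the inclusions $((m'm)\,h(x),g_{m'm}(x))\in E$ (instances of the definition of $h$ at the indices $m'm$) say precisely that the orbit of $z$ $E$-shadows the orbit map $m'\mapsto g_{m'}(g_m(x))$ of the point $g_m(x)$. Uniqueness of the shadowing point then forces $z=h(g_m(x))$, i.e.\ $m\,h(x)=h(g_m(x))$, which is the desired equivariance. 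The main obstacle, as I see it, is not this final identity but the groundwork beneath it: choosing the entourages so that $E\circ E\subseteq W_0$ and expansivity truly yield a \emph{single} shadowing point, and harvesting continuity of $h$ from compactness and the closedness of $E$ in the purely uniform, non-metrizable framework. One must also check that the $C^0$-closeness in the definition of topological stability feeds correctly into the pseudo-orbit condition, which is ensured by the relation $g_{sm}=g_sg_m$.
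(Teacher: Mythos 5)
Your proof is correct, and its overall skeleton is the same Walters-type argument the paper uses: perturbed orbits $(g_m(x))_{m\in M}$ are $(K,V)$-pseudo-orbits by the homomorphism relation $g_{km}=g_k\circ g_m$, the tracing point exists by the pseudo-orbit tracing property and is unique by expansivity (the paper isolates this as Proposition~\ref{p:exp-unique-tracing-point}), equivariance follows from that same uniqueness applied to the family indexed by $m'm$, and closeness to the identity comes from evaluating at $1_M$ after shrinking the tracing entourage into the target entourage --- exactly as in the paper, which proves the semigroup semistability statement (Theorem~\ref{t:main-semigroup}) first and then specializes at $s=1_M$ to get stability. The one genuinely different step is the continuity of $h$. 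The paper chooses a symmetric $V$ with $V\circ V\circ V\subset E$ and proves continuity quantitatively: it invokes uniform expansivity (Proposition~\ref{p:uniform-exp}) to get a finite set $K_0\subset S$, then uses uniform continuity of the finitely many maps $\beta_s$, $s\in K_0$, to produce an entourage $V_0$ with $(x_1,x_2)\in V_0$ forcing $(h(x_1),h(x_2))\in U_0$. You instead take the tracing entourage $E$ \emph{closed} and symmetric with $E\circ E$ inside the expansivity entourage, and harvest continuity from the closed-graph theorem for maps into a compact Hausdorff space: if $(x_\lambda,h(x_\lambda))\to(x,z)$, then closedness of $E$ and continuity of the action maps and of each $g_m$ give $(mz,g_m(x))\in E$ for all $m$, so $z=h(x)$ by uniqueness, and a map with closed graph into a compact Hausdorff codomain is continuous. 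This is valid (closed entourages form a base, as the paper notes) and slightly leaner: it needs only $E\circ E$ inside the expansivity entourage rather than a triple composition, and it bypasses Proposition~\ref{p:uniform-exp} entirely. What it gives up is the quantitative, uniform modulus-of-continuity information that the paper's argument produces and that makes the paper's proof a single package also yielding the injectivity refinement in Theorem~\ref{t:main-semigroup}; your closed-graph route, being non-quantitative, delivers continuity but nothing about how $h$ varies with the entourages.
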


We shall also obtain the following dynamical characterization of subshifts of finite type over monoids,  which was previously established 
by Walters~\cite[Theorem~1]{walters-potp-1978} (resp.~Oprocha~\cite[Theorem~4.5]{oprocha-2008}, resp.~Chung and Lee~\cite[Theorem~3.2]{chung-lee})
in the  case when $M =\Z$ (resp.~$M = \Z^d$, resp.~$M$ is a finitely generated group)
and $X$ is a compact metric space.

\begin{theorem}
\label{t:sft-potp}
Let $M$ be a monoid and let $A$ be a finite set.
Let $X \subset A^M$ be a subshift.
Then the following conditions are equivalent:
\begin{enumerate}[\rm (a)]
\item
the subshift $X$ is of finite type;
\item
the dynamical system $(X,M)$ has the pseudo-orbit tracing property.
\end{enumerate}
\end{theorem}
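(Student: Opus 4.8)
The plan is to pass freely between the symbolic and the uniform descriptions of $A^M$. Since $A$ is finite and $A^M$ carries the product topology, a basis of entourages for the unique uniform structure of the compact Hausdorff space $A^M$ is given by the sets $W_\Omega = \{(x,y) : x|_\Omega = y|_\Omega\}$, where $\Omega$ ranges over the finite subsets of $M$; moreover $(m \cdot x, m \cdot y) \in W_\Omega$ if and only if $(x,y) \in W_{\Omega m}$, so shifts interact transparently with this basis. I will also use the standard reformulation of (a): $X$ is of finite type precisely when there are a finite window $\Omega \subseteq M$ and a set of admissible patterns $P \subseteq A^\Omega$ with $X = \{z \in A^M : (m \cdot z)|_\Omega \in P \text{ for all } m \in M\}$, and one may always take $P = \{w|_\Omega : w \in X\}$.

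For the implication (a) $\Rightarrow$ (b), suppose $X$ is defined by a finite window $\Omega$ and admissible patterns $P$, and let a target entourage be given; I may assume it is $W_\Phi$ for some finite $\Phi \subseteq M$ with $\{1_M\} \cup \Omega \subseteq \Phi$. I would choose the tracing entourage $W$ fine enough (a suitable $W_\Psi$ with $\Psi$ built from $\Phi$ and the data governing pseudo-orbits) and, given any $W$-pseudo-orbit $(x_m)_{m \in M}$ in $X$, define the candidate tracing point by reading off identity coordinates, $y(m) := x_m(1_M)$. The compatibility conditions of the pseudo-orbit are exactly what is needed to show $y(\phi m) = x_m(\phi)$ for all $\phi \in \Phi$ and $m \in M$; this says simultaneously that $(m \cdot y, x_m) \in W_\Phi$ for every $m$, so that $y$ traces the pseudo-orbit, and that $(m \cdot y)|_\Omega = x_m|_\Omega \in P$ for every $m$, so that $y \in X$ because $x_m \in X$. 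Thus $(X,M)$ has the shadowing property.

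For the converse (b) $\Rightarrow$ (a), which I expect to be the substantial direction, I would follow Walters' original scheme. Apply the shadowing property with target entourage $W_{\{1_M\}}$ to obtain a tracing entourage, which I may take to be $W_\Psi$ for some finite $\Psi \subseteq M$; enlarging $\Psi$, assume $1_M \in \Psi$. Fix a finite window $\Omega \supseteq \Psi$ large enough to absorb the finitely many ``steps'' occurring in the pseudo-orbit compatibility, and set $P = \{w|_\Omega : w \in X\}$. I claim that $X = \{z \in A^M : (m \cdot z)|_\Omega \in P \text{ for all } m \in M\}$, which is the finite-type condition. Shift-invariance of $X$ gives the inclusion $\subseteq$, so it remains to take $z$ with $(m \cdot z)|_\Omega \in P$ for every $m$ and to show $z \in X$. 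For each $m$ choose a witness $w^{(m)} \in X$ with $w^{(m)}|_\Omega = (m \cdot z)|_\Omega$ and put $x_m := w^{(m)}$. Because every $x_m$ copies $z$ on the large window $\Omega$, the family $(x_m)_{m \in M}$ satisfies the compatibility conditions of a $W_\Psi$-pseudo-orbit; applying the tracing property yields $y \in X$ with $(m \cdot y, x_m) \in W_{\{1_M\}}$, that is $y(m) = (m \cdot y)(1_M) = x_m(1_M) = (m \cdot z)(1_M) = z(m)$ for all $m$. Hence $y = z$, so $z \in X$, and $X$ is of finite type.

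The main obstacle is the bookkeeping compressed into two phrases: that the witnesses $(w^{(m)})$ copying $z$ on $\Omega$ really do form a $W_\Psi$-pseudo-orbit, and, dually in (a) $\Rightarrow$ (b), that a single finite window $\Psi$ can be chosen so that the identity-coordinate relations propagate to all of $\Phi$. Both amount to choosing the finite subsets of $M$ so that the windows involved absorb the relevant shifts; equivalently, choosing $\Omega$ so that $W_\Psi$-compatibility of the family is forced by agreement on $\Omega$. This is precisely where finiteness of the type window is used and where the combinatorial content of the pseudo-orbit compatibility, rather than mere closeness of configurations, enters. As $A$ is finite these are finite verifications carried out uniformly over $m \in M$, so neither countability nor finite generation of $M$ is required.
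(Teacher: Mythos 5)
Your proposal is correct and takes essentially the same route as the paper's proof (Theorem~\ref{t:sft-potp-semigroup} via Proposition~\ref{p:sft-potp-if-KS}, specialized to $K=\{1_M\}$): in (a)$\Rightarrow$(b) you read the tracing point off the identity coordinates, $y(m)\coloneqq x_m(1_M)$, exactly as the paper defines $x(s)\coloneqq x_s(e)$, and in (b)$\Rightarrow$(a) your witnesses agreeing with $z$ on a window absorbing $\Psi T$ form a pseudo-orbit whose tracing point is forced to equal $z$, which is the paper's argument with $W \coloneqq K\cup H\cup HT$. The only detail worth making explicit is that a pseudo-orbit here is parametrized by a pair (finite step set, entourage), so in (a)$\Rightarrow$(b) your relation $y(\phi m)=x_m(\phi)$ requires taking the step set to contain $\Phi$ and $1_M$ to lie in the entourage window $\Psi$ --- precisely the paper's choice of $T = W\cup H$ and $V$ determined by $H\ni e$.
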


From the above results, we deduce the following.

\begin{corollary}
\label{c:sft-top-stab}
Let $M$ be a monoid, let $A$ be a finite set,
and let  $X \subset A^M$ be a subshift of finite type.
Then the dynamical system $(X,M)$ is topologically stable.
\end{corollary}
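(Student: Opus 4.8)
The plan is to deduce the corollary directly from Theorems~\ref{t:main} and~\ref{t:sft-potp}. Theorem~\ref{t:main} has two hypotheses on the system $(X,M)$: expansiveness and the pseudo-orbit tracing property. The second is supplied immediately by Theorem~\ref{t:sft-potp}: since $X \subset A^M$ is a subshift of finite type, the implication (a)$\Rightarrow$(b) shows that $(X,M)$ has the pseudo-orbit tracing property. It therefore remains only to verify that $(X,M)$ is expansive, after which Theorem~\ref{t:main} applies and yields topological stability.

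For expansiveness, I would argue that any subshift $X \subset A^M$ with $A$ finite is expansive as soon as $M$ is a monoid. Equip $A$ with the discrete topology and $A^M$ with the prodiscrete topology, so that $X$ is a compact Hausdorff $M$-invariant subspace. Let $1_M$ denote the identity of $M$ and consider $W = \{(x,y) \in X \times X : x(1_M) = y(1_M)\}$, which is a neighborhood of the diagonal since $A$ is discrete. Given distinct $x,y \in X$, there is some $m \in M$ at which the two configurations disagree, and translating by $m$ brings this disagreement to the coordinate $1_M$; that is, $m \cdot x$ and $m \cdot y$ differ at $1_M$, so $(m \cdot x, m \cdot y) \notin W$. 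Hence $W$ is an expansive entourage and $(X,M)$ is expansive.

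Combining the two steps, $(X,M)$ is expansive and has the pseudo-orbit tracing property, so Theorem~\ref{t:main} gives that $(X,M)$ is topologically stable. I expect the genuinely substantive content to reside entirely in Theorems~\ref{t:main} and~\ref{t:sft-potp}, and the corollary to be essentially their formal combination. The one point in the corollary itself requiring care is the expansiveness step, where the existence of the identity $1_M$ is essential for translating an arbitrary disagreement into the fixed observation coordinate. This is precisely the feature distinguishing monoids from general semigroups --- consistent with the fact, recorded in the abstract, that expansiveness of full shifts can fail over some semigroups --- so I would take care to invoke the monoid hypothesis there rather than assuming expansiveness for free.
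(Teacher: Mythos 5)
Your proposal is correct and takes essentially the same route as the paper: the paper likewise combines Theorem~\ref{t:main} with the implication (a)$\Rightarrow$(b) of Theorem~\ref{t:sft-potp}, and obtains expansiveness of $(X,M)$ from Proposition~\ref{p:shift-expansive}, Remark~\ref{rem:monoid-exp}, and Remark~\ref{rem:subsystem-exp}. Your inline expansiveness argument, using the entourage $\{(x,y) \in X \times X : x(1_M) = y(1_M)\}$ and noting that $(mx)(1_M) = x(m)$, is exactly the argument underlying those cited results, so the two proofs coincide in substance.
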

 
Bucki~\cite{bucki} studied topological stability and shadowing of subshifts over finitely generated free monoids.
Theorem~\ref{t:sft-potp-semigroup} covers \cite[Theorem~3.6]{bucki}, and Theorem~\ref{t:main-semigroup} 
(resp.\ Corollary~\ref{c:sft-w-top-stable}) covers \cite[Theorem~4.8]{bucki} (resp.\ \cite[Corollary~4.9]{bucki}).

\par
For semigroup actions, we show that every expansive dynamical system $(X,S)$
with the pseudo-orbit tracing property is topologically semistable (Theorem~\ref{t:main-semigroup}).
We  extend the dynamical characterization of subshifts of finite type stated for monoids in Theorem~\ref{t:sft-potp} to semigroups $S$ 
admitting a left-identity element (see~Theorem~\ref{t:sft-potp-semigroup}).
\par
The organization of the paper is as follows.
In Section~\ref{sec:background},
we introduce basic notions, fix notation, and establish some preliminary results on expansivity and the pseudo-orbit tracing property.
Section~\ref{sec:top-stab} is devoted to topological semistability and topological stability.
We prove in particular Theorem~\ref{t:main}  and Theorem~\ref{t:main-semigroup}.
Section~\ref{sec:sft} deals with shifts and subshifts over semigroups.
We characterize semigroups whose full shifts are expansive (Proposition~\ref{p:shift-expansive})
and we give the proof of Theorem~\ref{t:sft-potp-semigroup}.
Section~\ref{sec:equicontinuous} is devoted to equicontinuous actions of semigroups.
We prove that every equicontinuous action of a finitely generated monoid on a Stone space has the pseudo-orbit tracing property
(Theorem~\ref{t:equicont-potp}). This extends a result previously obtained by  Chung and Lee~\cite[Theorem~4.1]{chung-lee}.
In Section~\ref{sec:rooted-trees}, we investigate some dynamical properties of the boundary action of endomorphism semigroups of rooted trees.
We observe in particular that the action of the Grigorchuk group on the boundary of its tree has
the pseudo-orbit tracing property but is neither expansive nor topologically stable.
This is also  true for the Basilica group, the Hanoi Towers groups, and, more generally, all weakly branch groups.\\
\par

\noindent
{\bf Acknowledgments.} We thank Dominik Kwietniak for pointing out to our attention Bucki's paper~\cite{bucki} 
which contains results on the topological stability and shadowing of subshifts over finitely generated free monoids. 
Note that Theorem~\ref{t:sft-potp-semigroup} covers \cite[Theorem~3.6]{bucki}, and Theorem~\ref{t:main-semigroup} 
(resp.\ Corollary~\ref{c:sft-w-top-stable}) covers \cite[Theorem~4.8]{bucki} (resp.\ \cite[Corollary~4.9]{bucki}).

\section{Preliminaries}
\label{sec:background}
In this section we fix notation, introduce basic definitions, and establish some preliminary results. 

\subsection{General notation}
We write $\N \coloneqq \{0,1,\dots,\}$ for the set of non-negative integers.
\par
Given a set $X$, we denote by $\PP(X)$ the set of all subsets of $X$.
\par
We use $\sqcup$ to denote disjoint union and $|\cdot|$ for cardinality of finite sets.

\subsection{Semigroup actions}
A \emph{semigroup} is a set equipped with an associative binary operation (see e.g.~\cite{clifford-preston}).
Thus, a semigroup is a set $S$ together with a map $S \times S \to S$, $(s,s') \mapsto ss'$, such that
$(s_1 s_2)s_3 = s_1(s_2s_3)$ for all $s_1,s_2,s_3 \in S$.
\par
Let $S$ be a semigroup.
An element $s \in S$ is called a \emph{left-identity} (resp.~\emph{right-identity}, resp.~\emph{left-zero}, resp.~\emph{right-zero}) element of $S$
if one has $st = t$ (resp.~$ts = t$, resp.~$st = s$, resp.~$ts = s$) for all $t \in S$.
One says that $s \in S$ is an \emph{identity} (resp.~\emph{zero}) element of $S$ if $s$ is both a left-identity and a right-identity (resp.~a left-zero and a right-zero) element of $S$.
If $S$ admits an identity (resp.~a zero) element then this element is unique and usually denoted by $1_S$ (resp.~$0_S$).
One says that a subset $T \subset S$ is a \emph{subsemigroup} of $S$ if one has $ss' \in T$ for all $s,s' \in T$.
Given any subset $\Sigma \subset S$, the intersection of all subsemigroups of $S$ containing $\Sigma$ is the unique minimal subsemigroup of $S$ containing $\Sigma$ and is denoted by $\langle \Sigma \rangle$.
One says that the semigroup $S$ is \emph{finitely generated} if there exists a finite subset $\Sigma \subset S$ such that $S = \langle \Sigma \rangle$.
There is an induced semigroup structure on $\PP(S)$ defined by
$\Sigma \Sigma' \coloneqq \{s s' : s \in \Sigma, s' \in \Sigma'\}$ for all $\Sigma,\Sigma' \in \PP(S)$.
\par
Given two semigroups $S$ and $T$, a \emph{semigroup morphism} from $S$ into $T$ is a map $f \colon S \to T$ such that $f(ss') = f(s)f(s')$ for all $s,s' \in S$.
\par
A \emph{monoid} is a semigroup admitting an identity element.
\par
One says that a subset $N$ of a monoid $M$ is a \emph{submonoid} of $M$ if $N$ is a subsemigroup of $M$ and $1_M \in N$.
\par
If $M$ and $N$ are monoids, a \emph{monoid morphism} from $M$ into $N$ is a semigroup morphism $f \colon M \to N$ such that $f(1_M) = 1_N$.
\par
The class of all semigroups (resp.~monoids) and their morphisms form a category.
The category of groups is a full subcategory of the category of monoids.
The category of monoids is a subcategory of the category of semigroups but it is not a full subcategory.
\par
Given a set $X$, the set $\Map(X)$ of all maps $X  \to X$  is a monoid for the composition of maps.
The identity element of $\Map(X)$ is the identity map $\Id_X \colon X \to X$, $x \mapsto x$.
\par
An \emph{action} of a semigroup  $S$ on a set $X$ is a semigroup morphism $\alpha \colon S \to \Map(X)$.
Writing $\alpha_s \coloneqq \alpha(s)$ for $s \in S$, a map $\alpha \colon S \to \Map(X)$ is a semigroup action if and only if
$\alpha_s \circ \alpha_{s'} = \alpha_{s s'}$ for all $s,s' \in S$.
Equivalently, writing $s x \coloneqq \alpha_s(x)$ for $s \in S$ and $x \in X$,
we may regard  an action of the semigroup $S$ on the set $X$ 
as being defined by a map $S \times X \to X$, $(s,x) \mapsto sx$, that satisfies
$s(s'x) = (ss')x$  for all $s,s' \in S$ and $x \in X$.
One says that an action of a semigroup $S$ on a set $X$ is \emph{transitive} if for all $x,y \in X$ there exists $s \in S$ such that $y = sx$.
\par
If $M$ is a monoid, a \emph{monoid action} of $M$ on $X$ is a monoid morphism $\alpha \colon M \to \Map(X)$.
In other words, a monoid action of $M$ on $X$ is a semigroup action of $M$ on $X$ sending the identity element $1_M$ to the identity map $\Id_X$.
\par
Let $X$ be a topological space and let $C(X)$ denote the submonoid of  $\Map(X)$ consisting of all continuous maps $X \to X$.
One says that an action $\alpha \colon S \to \Map(X)$ of a semigroup  $S$ on $X$ is \emph{continuous} if $\alpha(S) \subset C(X)$.
This amounts to saying that the map $X \to X$, $x \mapsto s x$, is continuous for every $s \in S$.
One says that a continuous action $\alpha \colon S \times X \to X$ is \emph{minimal} if the $\alpha$-orbit set $\{\alpha_s(x):s \in S\}$ of every point $x \in X$ is dense in $X$.
\par
Given continuous actions $\alpha,\beta \colon S \times X \to X$, one 
 says that $\alpha$ is a \emph{factor} of $\beta$ if there exists a continuous surjective map $h \colon X \to X$ such that $\alpha_s \circ h = h \circ \beta_s$ for all $s \in S$.
 One says that $\alpha$ and $\beta$ are \emph{topologically conjugate}
if there exists a homeomorphism $h \colon X \to X$ such that $\alpha_s \circ h = h \circ \beta_s$ for all $s \in S$.
\par
A \emph{dynamical system} is a triple $(X,S,\alpha)$ where $X$ is a compact Hausdorff space, $S$ is a semigroup (resp.~a monoid), and $\alpha \colon S \times X \to X$ is a continuous action of $S$ on $X$.
A dynamical system $(X,S,\alpha)$ will be simply denoted by $(X,S)$ if there is no risk of confusion.

\subsection{Prodiscrete spaces}
Let $(A_i)_{i \in I}$ be a family of finite  sets indexed by a set $I$.
Consider the product set $X \coloneqq \prod_{i \in I} A_i$.
For $i \in I$, we denote by $x(i)$ the projection of $x \in X$ on $A_i$.
Given $x \in X$ and $J \subset I$, we write $x|_J \coloneqq (x_j)_{j \in J} \in \prod_{j \in J} A_j$.
We equip $X$ with its  \emph{prodiscrete topology}, that   is,  the product topology obtained by taking the discrete topology on every factor $A_i$ of $X$.
A base of open sets for the prodiscrete topology on $X$ is formed by the \emph{cylinder sets}
\[
\{x \in X : x|_K = c \},
\]
where $K$ runs over all finite subsets of $I$ and $c$ over all elements of the finite set $\prod_{k \in K} A_k$.
As the product of a family of compact (resp.~Hausdorff, resp.~totally disconnected) spaces is itself compact (resp.~Hausdorff, resp.~totally disconnected), 
the prodiscrete  topology  on $X$ is compact, Hausdorff, and  totally disconnected.
If every $A_i$ has more than one element and  $I$ is countably infinite, then  $X$, with its prodiscrete topology,  is homeomorphic to the triadic Cantor set on the real line and therefore metrizable.
On the other hand, if every $A_i$ has more than one element and $I$ is uncountable, then $X$ is not metrizable for the prodiscrete topology.

\subsection{Stone spaces}
Recall that a  topological space $X$  is said to be \emph{totally disconnected} if the only connected subsets of $X$ are the empty set $\varnothing$ and the singletons $\{x\}$ ($x \in X$).
A \emph{Stone space} is a topological space which is  compact, Hausdorff, and totally disconnected.
It is clear that every topological space which is homeomorphic to a closed subspace of a product of finite discrete sets is a Stone space.
The converse is also true by the Stone representation theorem. 
Actually, a topological space $X$ is a Stone space if and only if there exists a set $I$ such that
$X$ is homeomorphic to a closed subspace of the product space $\prod_{i \in I} A_i$, where $A_i \coloneqq \{0,1\}$ for all $i \in I$
(see for example~\cite[Theorem~2.7]{coo-book}). 
 
   \subsection{The uniform structure of a compact Hausdorff space}
The reader is assumed to have some familiarity with the basic notions in the theory of uniform spaces 
(see e.g.~\cite[Chapitre~II]{bourbaki-top-gen} or \cite[Chapter~6]{kelley}).
\par
Let $X$ be a set. 
We denote by $\Delta_X$ the \emph{diagonal} of $X$, i.e., the subset $\Delta_X \coloneqq \{(x,x):x \in X\} \subset X \times X$.
The \emph{inverse} of a subset $U \subset X \times X$ is the subset
$\widetilde{U} \subset X \times X$ defined by
$\widetilde{U} \coloneqq \{(y,x) : (x,y) \in U\}$.
 A subset $U \subset X \times X$ is called \emph{symmetric} if $\widetilde{U} = U$.
The \emph{composite} of two subsets $U,V \in \PP(X \times X)$ is the subset $U \circ V \in \PP(X \times X)$
consisting of all $(x,y) \in X \times X$ such that there exists $z \in X$ with $(x,z) \in U$ and $(z,y) \in V$.
Observe that $\PP(X \times X)$ is a monoid for $\circ$
whose identity element is $\Delta_X$.
\par
Recall that a  \emph{uniform structure} on a set $X$ is a non-empty subset $\UU \subset \PP(X \times X)$ satisfying the following conditions:
(1)  if $U \in \UU$ then $\Delta_X \subset U$;
(2) if $U \in \UU$ and $U \subset V \subset X \times X$ then $V \in \UU$;
(3) if $U \in \UU$ and $V \in \UU$  then $U \cap V \in \UU$;
(4) if $U \in \UU$ then $\widetilde{U} \in \UU$;
(5)  if $U \in \UU$  then there exists $V \in \UU$ such that $V\circ V \subset U$.
The elements of $\UU$ are then called the \emph{entourages} of $X$ (for the uniform structure $\UU$).
The \emph{topology associated} with a uniform structure $\UU$ on $X$ is the topology on $X$ whose open subsets are the sets $\Omega \subset X$
satisfying the following condition:  for every $x \in \Omega$ there exists $U \in \UU$ such that $\{ y \in X : (x,y) \in U\} \subset \Omega$. 
\par  
If $X$ is a compact Hausdorff space, then its topology  is associated with a unique uniform structure on $X$
(see e.g.~\cite[II, p. 54, Th\'eor\`eme~1]{bourbaki-top-gen}).
The entourages of $X$ for this canonical uniform structure are the neighborhoods in $X \times X$ of
the diagonal $\Delta_X \subset X \times X$.
Note that the open  entourages form a base of the entourages of the uniform structure on $X$ by this definition.
The closed entourages of $X$ form also  a base of entourages of the uniform structure
 (this easily follows from the fact that $X \times X$ is normal since it is compact Hausdorff). 

\begin{remark}
\label{r:entourages-met}
Suppose that  $X$ is a compact metrizable space  and let   $d$  be a metric on $X$ compatible with the topology.
Then the sets 
 $\{(x,y) \in X \times X : d(x,y) \leq  \varepsilon\}$, where $\varepsilon$ runs over all positive real numbers,
 form  a base of entourages for the canonical uniform structure on $X$.
 \end{remark}

\begin{remark}
\label{r:base-entour-prodiscrete}
Let $X$ be a Stone space. 
Let $(A_i)_{i \in I}$ be a family of finite discrete spaces
such that $X$ is homeomorphic to a closed subset 
of $\prod_{i \in I} A_i$ for the prodiscrete topology. 
Then the sets $\{(x,y) \in X \times X : x|_K = y|_K\}$, where $K$ runs over all finite subsets of $I$,
form  a base of entourages for the canonical uniform structure on $X$.
\end{remark}

\subsection{Expansivity}
\label{ss:exp}

\begin{definition}[Expansivity]
\label{def:expansive}
Let $X$ be a compact Hausdorff space equipped with a continuous action of a semigroup $S$. 
One says that an entourage $E$ of $X$ is an \emph{expansivity entourage} for the dynamical system $(X,S)$, or that $(X,S)$ is $E$-\emph{expansive},  if, for all distinct $x,y \in X$, 
there exists $s \in S$ such that $(sx,sy) \notin E$.
\par  
One says that the dynamical system $(X,S)$ is \emph{expansive} if there exists an entourage $E$ of $X$ such that $(X,S)$ is $E$-expansive.
\end{definition}

\begin{remark}
\label{r:subexpentourage}
If  the dynamical system $(X,S)$ is $E$-expansive for some entourage $E$ of $X$,
 then $(X,S)$ is $E'$-expansive for every entourage $E'$ of $X$ such that $E' \subset E$.
\end{remark}

\begin{remark}
\label{rem:subsystem-exp}
If the dynamical system $(X,S)$ is expansive and $Y$ is an $S$-invariant closed subset of $X$, then the dynamical system $(Y,S)$ is also expansive.
Indeed, if $E \subset X \times X$ is an expansivity entourage for $(X,S)$, then $E \cap (Y \times Y)$ is an expansivity entourage for $(Y,S)$.
\end{remark}

\begin{remark}
Suppose that $X$ is a compact metrizable space equipped with a continuous action of a semigroup $S$. 
Let $d$ be a metric on $X$ compatible with the topology.
Then it follows from Remark~\ref{r:entourages-met} 
that the dynamical system $(X,S)$ is expansive if and only if there exists a constant $C > 0$ such that, for all distinct $x,y \in X$, there exists $s \in S$ such that $d(sx,sy) \geq C$.
Such a constant $C$ is then called an \emph{expansivity constant} for
$(X,S,d)$.
\end{remark}

The following result is Lemma~1.2.7 in~\cite{nekrashevych}. 
It extends  Lemma~2 in~\cite{walters-potp-1978} and Lemma~2.10 in~\cite{chung-lee}. 

\begin{proposition}[Uniform expansivity]
\label{p:uniform-exp}
Let $X$ be a compact Hausdorff space equipped with an expansive  continuous action of a semigroup $S$.
Suppose that  $E$ is an expansivity entourage for $(X,S)$ and let $U$ be an entourage of $X$. 
Then there exists a  finite subset $K \subset S$ such that the following holds: 
if $x, y \in X$ satisfy  $(sx,sy) \in E$ for all $s \in K$, 
then $(x,y) \in U$.
\end{proposition}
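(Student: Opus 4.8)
The plan is to turn the pointwise expansivity hypothesis into a finite condition by a compactness argument applied to the diagonal action of $S$ on the compact space $X \times X$. For $s \in S$ write $\phi_s \colon X \times X \to X \times X$ for the continuous map $(x,y) \mapsto (sx,sy)$, and for a finite $K \subseteq S$ set $D_K \coloneqq \{(x,y) \in X \times X : (sx,sy) \in E \text{ for all } s \in K\} = \bigcap_{s \in K} \phi_s^{-1}(E)$. The conclusion to be proved is simply that $D_K \subseteq U$ for some finite $K$. Since $E \supseteq \Delta_X$ and $(X,S)$ is $E$-expansive, the intersection over the directed family of all finite subsets of $S$ satisfies $\bigcap_K D_K = \bigcap_{s \in S} \phi_s^{-1}(E) = \Delta_X \subseteq U$, so the whole content of the statement is the passage from this full intersection to a finite subintersection still contained in $U$.

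First I would put $E$ and $U$ into convenient form. Since the open entourages form a base, I may shrink $U$ to an open entourage $U'$ with $\Delta_X \subseteq U' \subseteq U$; proving $D_K \subseteq U'$ then suffices, and $Z \coloneqq (X \times X) \setminus U'$ is closed, hence compact. I would likewise take the expansivity entourage to be closed, so that each $\phi_s^{-1}(E)$ is closed, being the preimage of a closed set under the continuous map $\phi_s$. Granting this, I would argue by contradiction in the style of Walters' subsequence lemma: if no finite $K$ worked, then, indexing by the finite subsets of $S$ directed by inclusion, I could choose a net $(x_K,y_K)$ with $(sx_K,sy_K) \in E$ for all $s \in K$ but $(x_K,y_K) \in Z$. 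By compactness of $X \times X$ a subnet converges to some $(x_*,y_*)$, which lies in the closed set $Z$, so that $x_* \neq y_*$. For each fixed $s \in S$ one eventually has $s \in K$ along the net, whence $(sx_K,sy_K) \in E$; passing to the limit and using continuity of $\phi_s$ together with the closedness of $E$ gives $(sx_*,sy_*) \in E$. Thus $(sx_*,sy_*) \in E$ for every $s \in S$, contradicting $E$-expansivity of the distinct pair $x_*,y_*$. Equivalently, and perhaps more cleanly, the closed sets $\phi_s^{-1}(E) \cap Z$ ($s \in S$) have empty total intersection, so by the finite intersection property finitely many already have empty intersection, which yields $K$ directly.

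The step I expect to be the main obstacle is the reduction guaranteeing that the relevant sets are closed, i.e.\ that one may work with a closed expansivity entourage; this is precisely where the compact Hausdorff uniform structure enters, and it is indispensable, since without closedness the family $\{\phi_s^{-1}(E)\}_{s\in S}$ is merely a collection of neighborhoods of $\Delta_X$ whose intersection is $\Delta_X$, and no finite subintersection need be contained in $U$. Because the conclusion for a larger entourage implies it for any smaller one (the hypothesis $(sx,sy) \in E$ only becomes easier to satisfy as $E$ grows), it suffices to establish the statement for the closed entourage $\overline{E} \supseteq E$ and then restrict to $E$; the delicate point to verify is that $(X,S)$ is still $\overline{E}$-expansive, the reverse implication to the easy one recorded in Remark~\ref{r:subexpentourage}. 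This holds automatically in the cases of principal interest, where $E$ is already closed: on a compact metric space one takes $E = \{(x,y) : d(x,y) \leq c\}$ as in Remark~\ref{r:entourages-met}, and on a Stone space one takes a clopen entourage as in Remark~\ref{r:base-entour-prodiscrete}. The remaining manipulations (shrinking $U$, the directed-set bookkeeping for the net, and the use of continuity of each $\phi_s$) are routine.
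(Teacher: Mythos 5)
Your proposal is, at its core, the same proof as the paper's: assume $E$ closed and $U$ open, index putative counterexamples by the finite subsets of $S$ directed by inclusion, extract a cluster point $(x_*,y_*)$ lying in the closed set $(X\times X)\setminus U$, and use continuity of each $\phi_s\colon (x,y)\mapsto (sx,sy)$ together with closedness of $E$ to get $(sx_*,sy_*)\in E$ for all $s\in S$, contradicting expansivity; your finite-intersection-property reformulation is an equivalent and cleaner packaging of the same compactness argument. Where you and the paper part ways is exactly the point you flag as the main obstacle, and there you are right and the paper is not. The paper's proof opens with ``we may assume that $E$ is closed,'' justified by the fact that the closed entourages form a base together with Remark~\ref{r:subexpentourage}; but that replaces $E$ by a closed sub-entourage $E'\subset E$, and, as you observe, the monotonicity runs the wrong way: the conclusion for $E'$ controls only $\bigcap_{s\in K}\phi_s^{-1}(E')$, which is contained in $\bigcap_{s\in K}\phi_s^{-1}(E)$, so it does not yield the conclusion for $E$. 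The only legitimate reduction is the one you describe (enlarge $E$ to a closed entourage such as $\overline{E}$), and it needs the unproven hypothesis that $(X,S)$ is still $\overline{E}$-expansive.

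Moreover, the step you declined to take cannot be taken: the proposition as stated, for an arbitrary (not necessarily closed) expansivity entourage, is false. Take the full shift $X=\{0,1\}^{\Z}$ with its shift $\Z$-action, so $(nx)(k)=x(k+n)$, and put $E_0\coloneqq\{(x,y): x(0)=y(0)\}$. For $x\neq y$ let $D(x,y)\coloneqq\{k\in\Z : x(k)\neq y(k)\}$, and set $E\coloneqq E_0\cup W$, where $W$ is the set of pairs with $D(x,y)=\{0,m\}$ for some integer $m\geq 1$. Then $E$ is an entourage (it contains the open set $E_0\supset\Delta_X$), and it is an expansivity entourage: if $|D(x,y)|=2$, shifting by $n=\max D(x,y)$ gives a pair whose difference set is $\{j,0\}$ with $j<0$, hence outside $E_0\cup W$; if $|D(x,y)|\neq 2$, shifting by any $n\in D(x,y)$ gives a difference set containing $0$ of cardinality $\neq 2$, again outside $E_0\cup W$. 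Yet for $U\coloneqq E_0$ and any finite $K\subset\Z$, choose $m\geq 1$ with $m\notin K$ and a pair with $D(x,y)=\{0,m\}$: for $n\in K$ one has either $n\notin\{0,m\}$, whence $(nx,ny)\in E_0$, or $n=0$, whence $(x,y)\in W$; so $(nx,ny)\in E$ for all $n\in K$ while $(x,y)\notin U$. The upshot is that Proposition~\ref{p:uniform-exp} requires the additional hypothesis that $E$ be closed --- precisely the case your argument establishes --- and this is harmless for the paper's applications, since in the proof of Theorem~\ref{t:main-semigroup} one may fix a closed expansivity entourage from the outset by Remark~\ref{r:subexpentourage}. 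So there is no gap in your reasoning; the gap you identified is in the paper's own reduction.
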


\begin{proof}
Since the closed (resp.~open) entourages of $X$ form a base of entourages for the uniform structure on $X$ and every entourage contained in an expansivity entourage for $(X,S)$ is also an expansivity entourage for $(X,S)$,
we may assume that $E$ is closed and $U$ is open in $X \times X$.
Suppose by contradiction that,   
for every finite subset $K \subset S$, there exist $x_K, y_K \in X$ such that
$(sx_K,sy_K) \in E$ for all $s \in K$ and $(x_K,y_K) \notin U$.
Consider the directed set $I$ consisting of all finite subsets of $S$ (partially ordered by inclusion)  
and the net $(x_K,y_K)_{K \in I}$ in $X \times X$. 
By compactness of  $X \times X$, the net $(x_K,y_K)_{K \in I}$ admits a cluster point $(x,y) \in X \times X$. 
Since $U$ is open in $X \times X$, we have $(x,y) \notin U$ and hence $x \not= y$.
On the other hand, since $E$ is closed and the action of $S$ on $X$ is continuous, we have $(s x,s y) \in E$ for all $s \in S$.
This contradicts the fact that  $E$ is an expansivity entourage. 
\end{proof}

\subsection{The pseudo-orbit tracing property}\label{ss:POTP}

\begin{definition}[Tracing]
\label{def:trace}
Let $X$ be a compact Hausdorff space equipped with a continuous action of a semigroup $S$.
Given an entourage $U$ of $X$, one says that a family $(x_s)_{s \in S}$ of points of $X$ is $U$-\emph{traced} by the orbit of a point $x \in X$  if one has
$(sx,x_s) \in U$ for all $s \in S$.
\end{definition}

\begin{definition}[Pseudo-orbit]
\label{def:pseudo-orbit}
Let $X$ be a compact Hausdorff space equipped with a continuous action of a semigroup $S$.
Given a finite subset $K \subset S$  and an entourage $V$ of $X$,
one says that a family $(x_s)_{s\in S}$  of points of  $X$ 
is a $(K,V)$-\emph{pseudo-orbit} of $X$ if one has 
$(k x_s, x_{k s}) \in V$
for all $k \in K$ and $s \in S$. 
\end{definition}

\begin{definition}[Pseudo-orbit tracing property]
Let $X$ be a compact Hausdorff space equipped with a continuous action of a semigroup $S$. 
One says that the dynamical system $(X,S)$ has the \emph{pseudo-orbit tracing property} if,
for every entourage $U$ of $X$,  there exist a finite subset $K \subset S$ and an entourage  $V$ of $X$ such that
any $(K,V)$-pseudo-orbit  in $X$ is $U$-traced by the orbit of some
point of $X$.
\end{definition}

\begin{remark}
Suppose that $X$ is a compact metrizable space equipped with a continuous action of a semigroup $S$. 
Let $d$ be a metric on $X$ compatible with the topology.
Given a finite subset $K \subset S$ and $\delta > 0$, one says that a family $(x_s)_{s \in S}$ of points in $X$
is a $(K,\delta)$-\emph{pseudo-orbit} in $X$ if
one has $d(kx_s, x_{ks}) \leq  \delta$
for all $k \in K$ and $s \in S$.
It follows from Remark~\ref{r:entourages-met} that the dynamical system $(X,S)$ has the pseudo-orbit tracing property if and only if
for every $\varepsilon > 0$ there exist a finite subset $K \subset S$ and  $\delta > 0$
such that for every $(K,\delta)$-pseudo-orbit $(x_s)_{s\in S}$ in $X$ there exists
$x \in X$ satisfying $d(sx, x_s) \leq  \varepsilon$ for all $s \in S$
(one then says that the pseudo-orbit $(x_s)_{s \in S}$ is $\varepsilon$-\emph{traced} by the orbit of  $x$).
    \end{remark}

The following result extends  Lemma~3 in~\cite{walters-potp-1978} and Lemma~2.9 in~\cite{chung-lee}. 

\begin{proposition}
\label{p:exp-unique-tracing-point}
Let $X$ be a compact Hausdorff space equipped with a continuous action of a semigroup $S$.
Suppose that $(X,S)$ is expansive and let $E$ be an expansivity entourage for $(X,S)$. 
Let $V$ be a symmetric  entourage of $X$ such that $V \circ V \subset E$.
Then, if $(x_s)_{s \in S}$ is a family of points of $X$, there is at most one point $x \in X$ whose orbit $V$-traces the family $(x_s)_{s \in S}$. 
\end{proposition}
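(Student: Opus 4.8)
The plan is to argue directly, supposing for contradiction that two distinct points $x, y \in X$ both have orbits that $V$-trace the same family $(x_s)_{s \in S}$, and then to derive that $x = y$ by invoking expansivity. The only tools I need are the hypothesis that $V$ is symmetric with $V \circ V \subset E$ and the definition of an expansivity entourage.

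First I would unpack the two tracing hypotheses. Saying that the orbit of $x$ (resp.\ $y$) $V$-traces $(x_s)_{s \in S}$ means, by Definition~\ref{def:trace}, that $(sx, x_s) \in V$ (resp.\ $(sy, x_s) \in V$) for every $s \in S$. The idea is then a standard ``entourage triangle inequality'': since $V$ is symmetric, from $(sy, x_s) \in V$ I obtain $(x_s, sy) \in \widetilde{V} = V$. Composing the pair $(sx, x_s) \in V$ with $(x_s, sy) \in V$ through the common point $x_s$ yields $(sx, sy) \in V \circ V$, and hence $(sx, sy) \in E$ by the assumption $V \circ V \subset E$. Crucially, this holds for \emph{every} $s \in S$ simultaneously.

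The final step is to feed this into the expansivity of $(X,S)$. Because $E$ is an expansivity entourage (Definition~\ref{def:expansive}), if $x$ and $y$ were distinct there would exist some $s \in S$ with $(sx, sy) \notin E$; but we have just shown $(sx, sy) \in E$ for all $s \in S$, a contradiction. Therefore $x = y$, which is exactly the asserted uniqueness.

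I do not expect any genuine obstacle here: the result is a short formal consequence of the axioms for entourages together with expansivity, and it does not even require the compactness or continuity hypotheses beyond what is built into the setup. The one point to handle carefully is the bookkeeping of the composition of entourages, namely verifying that symmetry of $V$ is what allows the pair $(x_s, sy)$ to be used as the second factor, so that $(sx, sy)$ genuinely lands in $V \circ V$ and thus in $E$. Everything else is immediate.
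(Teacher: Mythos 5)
Your proof is correct and follows essentially the same route as the paper's: unpack the two tracing conditions as $(sx,x_s),(sy,x_s)\in V$, use symmetry of $V$ to compose through $x_s$ and obtain $(sx,sy)\in V\circ V\subset E$ for all $s\in S$, then conclude $x=y$ from expansivity. The only cosmetic difference is that you phrase the last step as a contradiction while the paper states it directly; the substance is identical.
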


\begin{proof}
Suppose that the family $(x_s)_{s \in S}$ is $V$-traced by the orbits of the points  $x$ and  $y$ ($x_s,x,y \in X$ for all $s \in S$).
This means that  $(sx,x_s),(sy,x_s) \in V$ for all $s \in S$.
As $V$ is symmetric, we deduce that
$(sx,sy) \in V \circ V \subset E$ for all $s \in S$.
This implies $x = y$ since $E$ is an expansivity entourage for $(X,S)$.
\end{proof}

The following result shows in particular that our definition of the pseudo-orbit tracing property  is equivalent to the one given 
in~\cite{osipov-2013} and  \cite{chung-lee} for   finitely generated groups. 

\begin{proposition}
\label{p:potp-fg}
Let $X$ be a compact Hausdorff space equipped with a continuous action of a semigroup $S$. 
Suppose that $S$ is finitely generated and let $\Sigma \subset S$ be a finite generating subset of $S$.
Then the following conditions are equivalent:
\begin{enumerate}[\rm (a)]
\item
the dynamical system $(X,S)$ has the pseudo-orbit tracing property;
\item
for every entourage $U$ of $X$, there exists an entourage $V$ of $X$ such  that any
$(\Sigma,V)$-pseudo-orbit in $X$ is $U$-traced by the orbit of some point of $X$.
\end{enumerate}
\end{proposition}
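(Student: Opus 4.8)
The plan is to prove the two implications separately. The implication (b)~$\Rightarrow$~(a) is immediate: given an entourage $U$ of $X$, condition (b) furnishes an entourage $V$ such that every $(\Sigma,V)$-pseudo-orbit is $U$-traced by some orbit, and since $\Sigma$ is a finite subset of $S$, taking $K \coloneqq \Sigma$ shows that $(X,S)$ satisfies the defining property of the pseudo-orbit tracing property for this $U$. As $U$ is arbitrary, (a) holds. The entire content therefore lies in the implication (a)~$\Rightarrow$~(b).

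For (a)~$\Rightarrow$~(b), the key is the following claim: \emph{for every finite subset $K \subset S$ and every entourage $V$ of $X$, there exists an entourage $V'$ of $X$ such that every $(\Sigma,V')$-pseudo-orbit in $X$ is a $(K,V)$-pseudo-orbit.} Granting this, (b) follows at once: given $U$, apply (a) to obtain a finite subset $K \subset S$ and an entourage $V$ with the tracing property, then apply the claim to produce $V'$; any $(\Sigma,V')$-pseudo-orbit is then a $(K,V)$-pseudo-orbit, and hence $U$-traced by the orbit of some point of $X$.

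To prove the claim, I would fix, for each $k \in K$, a factorization $k = \sigma_1^{(k)} \cdots \sigma_{n_k}^{(k)}$ with all $\sigma_i^{(k)} \in \Sigma$; such a factorization exists because $\Sigma$ generates $S$. Set $N \coloneqq \max_{k \in K} n_k$. The first step is to choose, by iterating axiom~(5) of the uniform structure, an entourage $W$ whose $N$-fold composite $W \circ \cdots \circ W$ is contained in $V$; since $\Delta_X \subset W$, one has $W \subset W \circ W$, so the $m$-fold composite is contained in $V$ for every $m \le N$. The second step exploits that $X$ is compact Hausdorff, whence every continuous self-map of $X$ is uniformly continuous for the canonical uniform structure: each of the finitely many prefix maps $x \mapsto \sigma_1^{(k)} \cdots \sigma_{j-1}^{(k)}\, x$ (for $k \in K$ and $1 \le j \le n_k$, the empty prefix giving $\Id_X$) pulls $W$ back to an entourage, and I take $V'$ to be the intersection of these finitely many entourages, which is again an entourage.

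Finally, given a $(\Sigma,V')$-pseudo-orbit $(x_s)_{s \in S}$, I would fix $k = \sigma_1 \cdots \sigma_n \in K$ (dropping the superscripts) and $s \in S$, put $u_j \coloneqq \sigma_{j+1} \cdots \sigma_n s$ for $0 \le j < n$ and $u_n \coloneqq s$, so that $u_{j-1} = \sigma_j u_j$ and $u_0 = ks$, and consider the telescoping chain $p_j \coloneqq \sigma_1 \cdots \sigma_j\, x_{u_j}$ (with $p_0 \coloneqq x_{ks}$), whose endpoints are $p_n = k x_s$ and $p_0 = x_{ks}$. The pseudo-orbit condition $(\sigma_j x_{u_j}, x_{u_{j-1}}) \in V'$, together with the uniform continuity of the prefix map and the choice of $V'$, gives $(p_j,p_{j-1}) \in W$ for each $j$, whence $(k x_s, x_{ks}) = (p_n,p_0)$ lies in the $n$-fold composite of $W$, which is contained in $V$. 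As $k$ and $s$ were arbitrary, $(x_s)_{s \in S}$ is a $(K,V)$-pseudo-orbit, proving the claim. I expect the only genuine obstacle to be the bookkeeping in this telescoping estimate, namely arranging the prefix maps and composite entourages so that a single entourage $V'$ works simultaneously for all $k \in K$; the passage from uniform continuity to $V'$ and the direction (b)~$\Rightarrow$~(a) are routine.
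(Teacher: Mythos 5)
Your proposal is correct, and it follows the paper's overall strategy: the easy direction is handled identically, and for (a) $\Rightarrow$ (b) both you and the paper establish exactly the same key claim, namely that for a suitable entourage every $(\Sigma,\cdot)$-pseudo-orbit is already a $(K,\cdot)$-pseudo-orbit for the pair supplied by the pseudo-orbit tracing property. Where you genuinely diverge is in how that claim is proved. The paper fixes $n$ with $K \subset \Sigma \cup \Sigma^2 \cup \cdots \cup \Sigma^n$ and constructs a nested tower of entourages $V_1 \subset \cdots \subset V_n$ in which each generator maps $V_i \circ V_i$ into $V_{i+1}$ and $V_n \circ V_n$ lands in the tracing entourage; it then inducts on word length, pushing the accumulated error forward through one generator at a time, so that uniform continuity is only ever invoked for the finitely many generator maps $\sigma \in \Sigma$. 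You instead fix one entourage $W$ whose $N$-fold composite lies in the target and pull $W$ back under the finitely many prefix maps $x \mapsto \sigma_1 \cdots \sigma_{j-1}x$, then chain the $n$ transported one-step errors telescopically. Your construction requires uniform continuity of products of generators (the prefix maps) rather than of the generators alone — harmless here, since every action map is uniformly continuous on a compact Hausdorff space — and in exchange you avoid the recursive tower and the induction, which makes the estimate arguably more transparent. Both implementations are complete; in particular, your monotonicity observation ($\Delta_X \subset W$ gives $W^{\circ m} \subset W^{\circ N}$ for $m \le N$), the fact that uniform continuity makes each pullback $(f \times f)^{-1}(W)$ an entourage by the filter property, and the consistent orientation of your chain (so that no symmetry of the entourages is needed) are exactly the points that need checking, and they all go through.
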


\begin{proof}
The implication (b) $\implies$ (a) is obvious.
\par
Conversely, suppose (a).
Let $U$ be an entourage of $X$.
Since $(X,S)$ has the pseudo-orbit tracing property, there exist a finite subset $K \subset S$ and an entourage $W$ of $X$ such that any
$(K,W)$-pseudo-orbit in $X$ is $U$-traced by the orbit of some point of $X$.
As $\Sigma$ generates $S$, there exists an integer $n \geq 1$ such that
\begin{equation}
\label{e:K-subset-B-Sigma-n} 
K \subset \Sigma \cup \Sigma^2 \cup \cdots \cup \Sigma^n.
\end{equation}
By compactness of $X$, the action of $S$ on $X$ is uniformly continuous, i.e., the map $X \to X$, $x \mapsto sx$ is uniformly continuous for every $s \in S$.
As $\Sigma$ is finite, we deduce that there exists a sequence $(V_i)_{1 \leq i \leq n}$ of entourages of $X$  satisfying the following conditions:
\begin{enumerate}[\rm (C1)]
\item
$V_i \subset V_{i + 1}$ for all $i \in \{1,2,\dots,n-1\}$;
\item
$(\sigma x,\sigma y) \in V_{i + 1}$ for all $i \in \{1,2,\dots,n-1\}$, $(x,y) \in V_i \circ V_i$, and $\sigma \in \Sigma$;
\item
$V_n \circ V_n  \subset W$.
\end{enumerate}
Let $V \coloneqq V_1$ and suppose that $(x_s)_{s \in S}$ is a $(\Sigma,V)$-pseudo-orbit in $X$.
This means that
\begin{equation}
\label{e:x-s-sigma-n-po}
(\sigma  x_s, x_{\sigma  s}) \in V \quad \text{for all } \sigma \in \Sigma \text{ and } s \in S.
\end{equation}
We claim that the family $(x_s)_{s \in S}$ is a $(K,W)$-pseudo-orbit.
Let  $k \in K$ and $s \in S$.
By~\eqref{e:K-subset-B-Sigma-n}, there exists   $i \in \{1,2,\dots,n\}$ and $\sigma_1,\sigma_2,\dots,\sigma_i \in \Sigma$ such that
$k = \sigma_i  \cdots \sigma_2 \sigma_1$.
Let us show by induction on $i$  that
\begin{equation}
\label{e:x-s-also-K-W}
(k x_s,x_{k s}) \in V_i \circ V_i.
\end{equation}
For $i = 1$, we have $k = \sigma_1$ and~\eqref{e:x-s-also-K-W} follows from~\eqref{e:x-s-sigma-n-po}
since $V = V_1 \subset V_1 \circ V_1$.
Suppose by induction that~\eqref{e:x-s-also-K-W} is true for some $i \in \{1,2,\dots,n-1\}$ and let $\sigma_{i+1} \in \Sigma$.
From~\eqref{e:x-s-also-K-W} and Condition~(C2),
we get
\[
(\sigma_{i+1} \sigma_i \cdots \sigma_2\sigma_1 x_s,\sigma_{i + 1} x_{\sigma_i \cdots \sigma_2\sigma_1 s}) \in V_{i + 1}.
\]
As $(\sigma_{i + 1}x_{\sigma_i \cdots \sigma_2\sigma_1 s},x_{\sigma_{i + 1} \sigma_i \cdots \sigma_2\sigma_1 s}) \in V = V_1 \subset V_{i+1}$ by~\eqref{e:x-s-sigma-n-po} and (C1), it follows that
\[
(\sigma_{i+1} \sigma_i \cdots \sigma_2\sigma_1 x_s,x_{\sigma_{i + 1} \sigma_i \cdots \sigma_2\sigma_1 s}) \in V_{i+1} \circ V_{i+1}.
\]
This completes induction and hence the proof of our claim.
Since $V_i \circ V_i \subset V_n \circ V_n \subset W$ by (C1) and (C3), we deduce from~\eqref{e:x-s-also-K-W} that
 $(x_s)_{s \in S}$ is a $(K,W)$-pseudo-orbit in $X$.
By our choice of $(K,W)$, we conclude that the family $(x_s)_{s \in S}$ is $U$-traced by the orbit of some point of $X$.
This shows that (a) implies (b).
\end{proof}

\section{Topological stability}
\label{sec:top-stab}

Let $X$ be a compact Hausdorff space and let $S$ be a semigroup.
Denote by   $\CSA_S(X)$ the set consisting of all continuous semigroup actions of  $S$ on $X$.
If  $C(X)$ is the set of all continuous maps from $X$ into itself, we have a natural inclusion
\[
\CSA_S(X) \subset  C(X)^S
\]
obtained by sending every $\alpha \in \CSA_S(X)$ to the family $(\alpha_s)_{s \in S}$.
\par
We equip $C(X)$ with the uniform structure of uniform convergence.
A base of entourages of $C(X)$ is formed by the sets
\[
\{(f,g) \in C(X) \times C(X) : (f(x),g(x)) \in U \text{  for all } x \in X\},
\]
where $U$ runs over all entourages of $X$.
The associated topology on $C(X)$ is the topology of \emph{uniform convergence}, or \emph{$C^0$-topology}. 
It is Hausdorff but not compact in general.
Equip   $C(X)^S = \prod_{s \in S} C(X)$ with the product uniform structure.
Observe that  $\CSA_S(X)$ is a closed subset of $C(X)^S$.
\par
A base of entourages for the uniform structure  on $\CSA_S(X)$ is formed by the sets
\[
\{(\alpha,\beta) \in \CSA_S(X) \times \CSA_S(X)  : (\alpha_k(x),\beta_k(x)) \in U \text{ for all } k \in K \text{ and } x \in X\}, 
\]
where $K$ runs over all finite subsets of $S$ and $U$  over all entourages of $X$.

\begin{remark}
\label{r:unif-structure-unif-conver-met}
Suppose that  $X$ is a compact metrizable space  and let   $d$  be a metric on $X$ compatible with the topology.
Then the uniform structure on $C(X)$ is metrizable.
A metric on $C(X)$  compatible with the uniform structure
is the metric $\delta \colon C(X) \times C(X) \to \R$ given by
\[
\delta(f,g) \coloneqq \sup_{x \in X} d(f(x),g(x)) \quad \text{for all } f,g \in C(X).
\]
A base of entourages for the uniform structure on $\CSA_S(X)$ is  formed by the sets
\[
\{(\alpha,\beta) \in \CSA_S(X) \times \CSA_S(X)  : \delta(\alpha_k,\beta_k) \leq  \varepsilon \text{ for all } k \in K\},
\]
where $K$ runs over all finite subsets of $S$ and $\varepsilon$ over all positive real numbers.
\end{remark}

\begin{definition}[Topological semistability]
\label{def:wts}
Let $X$ be a compact Hausdorff space and let $S$ be a semigroup.
Let $\alpha \in \CSA_S(X)$ be a continuous semigroup action of $S$ on $X$.
One says that the dynamical system $(X,S,\alpha)$ is \emph{topologically semistable} if 
there exists a neighborhood $N$ of $\alpha$ in $\CSA_S(X)$ such that
for every $\beta \in N$ 
there exists a continuous map $h \colon X \to X$  satisfying
$\alpha_s \circ h = h \circ \beta_s$ for all $s \in S$.  
\end{definition}

Let $X$ be a compact Hausdorff space and let $M$ be a monoid.
Denote by   $\CMA_M(X)$ the set consisting of all continuous monoid actions of  $M$ on $X$.
Clearly $\CMA_M(X)$ is a closed subset of $\CSA_M(X)$. 

\begin{definition}[Topological stability]
\label{def:top-sta}
Let $X$ be a compact Hausdorff space and let $M$ be a monoid.
Let $\alpha \in \CMA_M(X)$ be a continuous monoid action of $M$ on $X$.
One says that the dynamical system $(X,M,\alpha)$ is \emph{topologically stable} if for every entourage $U$ of $X$
there exists a neighborhood $N$ of $\alpha$ in $\CMA_M(X)$ such that
for every $\beta \in N$ 
there exists a continuous map $h \colon X \to X$ satisfying 
$\alpha_m \circ h = h \circ \beta_m$ for all $m \in M$
and $(h(x),x) \in U$ for all $x \in X$.  
\end{definition}

\begin{theorem}
\label{t:main-semigroup}
Let $X$ be a compact Hausdorff space equipped with a continuous action $\alpha$ of a semigroup $S$.
Suppose that the dynamical system $(X,S,\alpha)$ is expansive and has the pseudo-orbit tracing property.
Then $(X,S,\alpha)$ is topologically semistable.
Moreover, if $E$ is an expansivity entourage for $(X,S,\alpha)$, then the following holds:
there exists a neighborhood $N$ of $\alpha$ in $\CSA_S(X)$ such that if $\beta \in N$ is $E$-expansive then there exists
an injective continuous map $h \colon X \to X$ satisfying $\alpha_s \circ h = h \circ \beta_s$ for all $s \in S$.
\end{theorem}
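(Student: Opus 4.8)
The plan is to follow Walters' tracing strategy over an arbitrary semigroup: I would build the conjugating map $h$ pointwise, sending each $x$ to the unique point whose $\alpha$-orbit traces the $\beta$-orbit of $x$. Concretely, I would first fix, by iterating axiom~(5) of a uniform structure, a symmetric entourage $U$ of $X$ with $U \circ U \circ U \subset E$ (so in particular $U \circ U \subset E$). Applying the pseudo-orbit tracing property to $U$ yields a finite subset $K \subset S$ and an entourage $V$ of $X$ such that every $(K,V)$-pseudo-orbit is $U$-traced by the orbit of some point of $X$. I would then set
\[
N \coloneqq \{\beta \in \CSA_S(X) : (\alpha_k y, \beta_k y) \in V \text{ for all } k \in K, \ y \in X\},
\]
which is a neighborhood of $\alpha$. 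The key observation is that for $\beta \in N$ and $x \in X$ the family $x_s \coloneqq \beta_s x$ is a $(K,V)$-pseudo-orbit for $\alpha$: since $x_{ks} = \beta_{ks} x = \beta_k \beta_s x$, one has $(\alpha_k x_s, x_{ks}) = (\alpha_k(\beta_s x), \beta_k(\beta_s x)) \in V$ by applying the defining condition of $N$ at the point $\beta_s x$. Hence $(\beta_s x)_{s}$ is $U$-traced by some orbit, and since $U \circ U \subset E$ this tracing point is unique by Proposition~\ref{p:exp-unique-tracing-point}; I would define $h(x)$ to be it, so that $(\alpha_s h(x), \beta_s x) \in U$ for all $s \in S$.

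The identity $\alpha_s \circ h = h \circ \beta_s$ I would obtain from uniqueness: for fixed $s$, the point $\alpha_s h(x)$ satisfies $(\alpha_t(\alpha_s h(x)), \beta_{ts} x) = (\alpha_{ts} h(x), \beta_{ts} x) \in U$ for all $t \in S$, so its orbit $U$-traces the family $(\beta_t(\beta_s x))_t = (\beta_{ts} x)_t$, which is exactly the family traced by $h(\beta_s x)$; uniqueness then gives $h(\beta_s x) = \alpha_s h(x)$. I expect the continuity of $h$ to be the main obstacle. To prove it I would fix $x_0 \in X$ and an entourage $W$, and use uniform expansivity (Proposition~\ref{p:uniform-exp}) to find a finite subset $K' \subset S$ such that $(\alpha_s a, \alpha_s b) \in E$ for all $s \in K'$ implies $(a,b) \in W$. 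By continuity of the finitely many maps $\beta_s$ ($s \in K'$), I would choose a neighborhood $\Omega$ of $x_0$ with $(\beta_s x, \beta_s x_0) \in U$ for all $x \in \Omega$ and $s \in K'$. Then, composing $(\alpha_s h(x), \beta_s x) \in U$, $(\beta_s x, \beta_s x_0) \in U$, and $(\beta_s x_0, \alpha_s h(x_0)) \in U$ (the last by symmetry of $U$), I obtain $(\alpha_s h(x), \alpha_s h(x_0)) \in U \circ U \circ U \subset E$ for every $s \in K'$, whence $(h(x), h(x_0)) \in W$. This shows that $h$ is continuous and establishes topological semistability.

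For the moreover statement, suppose $\beta \in N$ is $E$-expansive and that $h(x) = h(x_1)$. For each $s \in S$ the two tracing relations yield $(\beta_s x, \alpha_s h(x)) \in U$ and $(\alpha_s h(x), \beta_s x_1) = (\alpha_s h(x_1), \beta_s x_1) \in U$, hence $(\beta_s x, \beta_s x_1) \in U \circ U \subset E$ for all $s \in S$; $E$-expansivity of $\beta$ then forces $x = x_1$, so $h$ is injective. The only delicate bookkeeping point is that the single neighborhood $N$ and the single entourage $U$ must simultaneously serve uniqueness of the tracing point, continuity of $h$, and injectivity of $h$, which is precisely why $U$ is taken symmetric with $U \circ U \circ U \subset E$ from the outset.
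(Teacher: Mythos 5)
Your proposal is correct and follows essentially the same argument as the paper's proof: the same symmetric entourage with triple composite inside $E$, the same definition of $h(x)$ as the unique point whose $\alpha$-orbit traces the $\beta$-orbit of $x$, and the same uniqueness, continuity (via Proposition~\ref{p:uniform-exp}), and injectivity arguments. The only differences are cosmetic ones: your entourages are named differently, and you verify continuity of $h$ pointwise at each $x_0$ rather than uniformly as in the paper.
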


\begin{proof}
Let $E$ be an expansivity entourage for $(X,S,\alpha)$
and let $V$ be a symmetric entourage of $X$ such that $V \circ V \circ V \subset E$
(the existence of $V$ follows from the properties of a uniform structure).
Since $(X,S,\alpha)$ has the pseudo-orbit tracing property, there exist a finite subset $K \subset S$ and an entourage $W$ of $X$
such that every $\alpha$-$(K,W)$-pseudo-orbit in $X$ is $V$-traced by some $\alpha$-orbit in $X$.
Consider  the neighborhood $N$ of $\alpha \in \CSA_S(X)$  defined by
\[
N \coloneqq \{\gamma \in \CSA_S(X) : (\alpha_k(x),\gamma_k(x)) \in W \text{ for all } k \in K \text{ and } x \in X\}.
\]
Let $\beta \in N$.
Given  $x \in X$, we have,  for all $k \in K$ and $s \in S$,
\begin{equation*}
(\alpha_k(\beta_s(x)),\beta_{ks}(x)) = (\alpha_k(\beta_s(x)),\beta_k(\beta_s(x))) \in W.
\end{equation*} 
 Therefore, the $\beta$-orbit  $(\beta_s(x))_{s \in S}$  of $x$ is an $\alpha$-$(K,W)$-pseudo-orbit in $X$.
By the choice of $K$ and $W$, it follows that the $\beta$-orbit of $x$ is $V$-traced by the $\alpha$-orbit of some point of $X$, 
i.e., there exists  $y \in X$ such that
\begin{equation}
\label{e:proof-wts-1} 
(\alpha_s(y),\beta_s(x)) \in V \quad \text{for all }s \in S.
\end{equation}
Since $V$ is a symmetric entourage of $X$ and $V \circ V \subset V \circ V \circ V \subset E$,
we deduce from Proposition~\ref{p:exp-unique-tracing-point} that  such a point $y$ is uniquely determined by $x$. 
Consider the map $h \colon X \to X$ defined by $h(x) \coloneqq y$ for all $x \in X$.
\par
Let us first prove  that
\begin{equation}
\label{e:f-semi-conj}  
\alpha_s \circ h = h \circ \beta_s  \quad \text{for all }s \in S.
\end{equation}
Using~\eqref{e:proof-wts-1}, we get, for all $s,t \in S$,
\[
(\alpha_t(\alpha_s(h(x))),\beta_t(\beta_s(x))) = (\alpha_{ts}(h(x)),\beta_{ts}(x)) \in V.
\]
Thus, the $\beta$-orbit of $\beta_s(x)$ is $V$-traced by  the $\alpha$-orbit of $\alpha_s(h(x))$. 
By definition of $h$, this implies that 
$\alpha_s(h(x)) = h(\beta_s(x))$ for all  $s \in S$  and  $x \in X$,
and~\eqref{e:f-semi-conj} follows. 
\par
Let us show now that the map $h \colon X \to X$ is continuous.
Let $U_0$ be an entourage of $X$.
By Proposition~\ref{p:uniform-exp}, there exists a finite subset $K_0 \subset S$  such that
if $y_1,y_2 \in X$ satisfy $(\alpha_s(y_1),\alpha_s(y_2)) \in E$ for all $s \in K_0$, then $(y_1,y_2) \in U_0$.
For every $s \in S$, the map $\beta_s \colon X \to X$ is continuous and therefore uniformly continuous by compactness of $X$.
Since $K_0$ is finite,
it follows that there exists an entourage  $V_0$ of $X$ such that if $x_1,x_2 \in X$ satisfy $(x_1,x_2) \in V_0$ then $(\beta_s(x_1),\beta_s(x_2)) \in V$ for all $s \in K_0$.
As $(\alpha_s(h(x_1)),\beta_s(x_1)),(\beta_s(x_2),\alpha_s(h(x_2))) \in V$ by construction of $h$ and the symmetry of $V$,
we deduce that
\[
(\alpha_s(h(x_1)),\alpha_s(h(x_2))) \in V \circ V \circ V \subset E
\]
for all $x_1,x_2 \in X$ such that $(x_1,x_2) \in V_0$ and $s \in K_0$.
By our choice of $K_0$, we conclude that $(h(x_1),h(x_2)) \in U_0$ for all $x_1,x_2 \in X$ such that $(x_1,x_2) \in V_0$.
This shows that  $h$ is  continuous 
and completes the proof of the topological semistability of the dynamical system  $(X,S,\alpha)$.
\par
Suppose now that $\beta \in N$ is $E$-expansive.
Let $x,x' \in X$ such that $h(x) = h(x')$.
We then deduce from~\eqref{e:proof-wts-1} and the symmetry of $V$ that
$(\beta_s(x),\beta_s(x')) \in V \circ V $ for all $s \in S$.
As $V \circ V \subset E$ and $\beta$ is $E$-expansive, this implies $x = x'$.
Therefore $h$ is injective.
\end{proof}

\begin{corollary}
\label{c:minimal-factor}
Let $X$ be a compact Hausdorff space equipped with a continuous semigroup action $\alpha$ of a semigroup $S$.
Suppose that the dynamical system $(X,S,\alpha)$ is expansive, minimal,  and has the pseudo-orbit tracing property.
Then there exists a neighborhood $N$ of $\alpha$ in $\CSA_S(X)$ such that $\alpha$ is a factor of every $\beta \in N$.
\end{corollary}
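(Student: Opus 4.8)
The plan is to deduce this directly from Theorem~\ref{t:main-semigroup}, which already supplies a neighborhood $N$ of $\alpha$ in $\CSA_S(X)$ together with, for each $\beta \in N$, a continuous map $h \colon X \to X$ satisfying $\alpha_s \circ h = h \circ \beta_s$ for all $s \in S$. To upgrade ``$h$ is a continuous semiconjugacy'' to ``$\alpha$ is a factor of $\beta$'', the only thing left to verify is that $h$ is surjective, and this is exactly where minimality of $\alpha$ enters.

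First I would record that the image $Y \coloneqq h(X)$ is a nonempty closed subset of $X$: it is nonempty because $X$ is (we assume $X \neq \varnothing$), and it is closed because $X$ is compact, $h$ is continuous, and $X$ is Hausdorff, so $h(X)$ is compact hence closed. Next I would check that $Y$ is $\alpha$-invariant, i.e.\ $\alpha_s(Y) \subseteq Y$ for every $s \in S$. This is immediate from the intertwining relation: for every $s \in S$ one has $\alpha_s(Y) = \alpha_s(h(X)) = h(\beta_s(X)) \subseteq h(X) = Y$.

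Finally I would invoke minimality. Choose any point $y \in Y$. By the invariance just established, the whole $\alpha$-orbit $\{\alpha_s(y) : s \in S\}$ is contained in $Y$. Since $(X,S,\alpha)$ is minimal, this orbit is dense in $X$, so its closure is $X$; but $Y$ is closed and contains the orbit, whence $X \subseteq Y$. Therefore $Y = X$, that is, $h$ is surjective. Together with continuity and the relation $\alpha_s \circ h = h \circ \beta_s$, this shows that $h$ realizes $\alpha$ as a factor of $\beta$, and since $\beta \in N$ was arbitrary the corollary follows with the same neighborhood $N$ provided by Theorem~\ref{t:main-semigroup}.

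The argument is short and I do not expect a genuine obstacle; the only point requiring a moment's care is the invariance step, where one must use the semiconjugacy in the direction $\alpha_s \circ h = h \circ \beta_s$ (and surjectivity of $\beta_s$ is \emph{not} needed, only that $\beta_s(X) \subseteq X$). One should also keep in mind that minimality is used in the correct form: every orbit closure equals $X$, which forces any nonempty closed $\alpha$-invariant subset to be all of $X$.
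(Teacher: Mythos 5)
Your proof is correct and follows essentially the same route as the paper: invoke Theorem~\ref{t:main-semigroup} to obtain the neighborhood $N$ and the continuous semiconjugacy $h$, then use minimality of $\alpha$ together with the closedness of $h(X)$ (from compactness) to conclude that $h$ is surjective. The paper phrases the final step by observing directly that $\{h(\beta_s(x)) : s \in S\} = \{\alpha_s(h(x)) : s \in S\}$ is a dense subset of the closed set $h(X)$, whereas you route it through $\alpha$-invariance of $h(X)$; these are the same argument in slightly different packaging.
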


\begin{proof}
By Theorem~\ref{t:main-semigroup}, there is a neighborhood $N$ of $\alpha$ in $\CSA_S(X)$ such that, for every $\beta \in N$, there is a continuous map $h \colon X \to X$ such that
$\alpha_s \circ h = h \circ \beta_s$ for all $s \in S$.
Given an arbitrary point $x \in X$, we have
$h(\beta_s(x)) = \alpha_s(h(x))$ for all $s \in S$. 
This implies that the set
$\{h(\beta_s(x)) : s \in S\}$ is dense in $X$ by minimality of $\alpha$.
As $h(X)$ is closed in $X$ by compactness of $X$,
we deduce that $h(X) = X$.
This shows that $h$ is surjective and hence that $\alpha$ is a factor of $\beta$.
\end{proof}

A topological space is said to be \emph{incompressible} if it is not homeomorphic to any of its proper subspaces~\cite{fletcher-sawyer}.
Every closed finite-dimensional topological manifold is incompressible by Brouwer's invariance of domain.

\begin{corollary}
\label{c:top-conj}
Let $X$ be a  compact Hausdorff space  equipped with a continuous semigroup action $\alpha$ of a semigroup $S$.
Suppose that the dynamical system $(X,S,\alpha)$ is expansive and has the pseudo-orbit tracing property.
Suppose also that  the space $X$ is incompressible (e.g., $X$ is a closed finite-dimensional topological manifold)
or that the action $\alpha$ is minimal.
Let $E$ be an expansivity entourage for $(X,S,\alpha)$.
Then there exists a neighborhood $N$ of $\alpha$ in $\CSA_S(X)$ such that 
every $E$-expansive action   $\beta \in N$ is 
topologically conjugate to $\alpha$.
\end{corollary}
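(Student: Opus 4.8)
The plan is to take the neighborhood $N$ furnished by Theorem~\ref{t:main-semigroup} for the given expansivity entourage $E$, and to upgrade the injective continuous semiconjugacy that it produces into a conjugating homeomorphism by establishing surjectivity.

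First I would fix an $E$-expansive action $\beta \in N$. By the last assertion of Theorem~\ref{t:main-semigroup}, there is an injective continuous map $h \colon X \to X$ with $\alpha_s \circ h = h \circ \beta_s$ for all $s \in S$. Since $X$ is compact Hausdorff, $h$ is a closed map; being injective as well, it is a homeomorphism of $X$ onto the closed subspace $h(X) \subset X$. Thus the whole problem reduces to proving that $h(X) = X$.

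Then I would treat the two hypotheses separately. If $X$ is incompressible, then $h(X) = X$ is immediate: otherwise $h$ would exhibit a homeomorphism of $X$ onto the proper subspace $h(X)$, contradicting incompressibility. If instead $\alpha$ is minimal, I would argue as in Corollary~\ref{c:minimal-factor}: picking any $x \in X$, the relation $\alpha_s(h(x)) = h(\beta_s(x))$ shows that the $\alpha$-orbit $\{\alpha_s(h(x)) : s \in S\}$ of $h(x)$ is contained in $h(X)$; by minimality this orbit is dense in $X$, and since $h(X)$ is closed, it follows that $h(X) = X$. In either case $h$ is a surjective injective continuous self-map of the compact Hausdorff space $X$, hence a homeomorphism, and the identity $\alpha_s \circ h = h \circ \beta_s$ makes it a topological conjugacy between $\beta$ and $\alpha$.

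The one point deserving attention is that surjectivity must be obtained for the very map $h$ produced by the expansive refinement of Theorem~\ref{t:main-semigroup}, so that injectivity is simultaneously retained; this is precisely why the $E$-expansivity of $\beta$ is essential, and it is also the reason the minimality argument yields here a full conjugacy rather than merely a factor as in Corollary~\ref{c:minimal-factor}. Beyond these standard compactness facts (an injective continuous map from a compact space to a Hausdorff space is an embedding with closed image), I do not expect any genuine obstacle.
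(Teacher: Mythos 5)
Your proof is correct and follows essentially the same route as the paper: invoke the final assertion of Theorem~\ref{t:main-semigroup} to obtain an injective continuous semiconjugacy $h$ for every $E$-expansive $\beta$ in the given neighborhood, then deduce surjectivity either from incompressibility (since an injective continuous self-map of a compact Hausdorff space is a homeomorphism onto its closed image) or, in the minimal case, from the density of the $\alpha$-orbit of $h(x)$ inside the closed set $h(X)$, exactly as in the proof of Corollary~\ref{c:minimal-factor}. No gaps; your added remark that injectivity and surjectivity must hold for the same map $h$ is a fair point of emphasis, and it is implicitly handled the same way in the paper.
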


\begin{proof}
By~Theorem~\ref{t:main-semigroup},
there exists a neighborhood $N$ of $\alpha$ in $\CSA_S(X)$ such that 
for every $E$-expansive action   $\beta \in N$,
there is an injective continuous map $h \colon X \to X$ such that $\alpha_s \circ h = h \circ \beta_s$ for all $s \in S$.
If $X$ is incompressible, then  $h$ is a homeomorphism since $X$ is a compact Hausdorff space.
On the other hand, if $\alpha$ is minimal, it follows from the proof of Corollary~\ref{c:minimal-factor} that $h$ is surjective, and hence a homeomorphism of $X$. 
\end{proof}

\begin{proof}[Proof of Theorem~\ref{t:main}]
Suppose that  $U$ is an entourage of $X$.
Let us return to the   proof of Theorem~\ref{t:main-semigroup} above.
After replacing $E$ by $E \cap U$ if necessary, we may assume that $E \subset U$.
Consider the neighborhood $N_1$ of $\alpha$ in $\CMA_M(X)$ defined by $N_1 \coloneqq N \cap \CMA_M(X)$.
By taking $s \coloneqq 1_M$ in~\eqref{e:proof-wts-1}, we get,
for any $\beta \in N_1$,
\[ 
(h(x),x) = (y,x) = (\alpha_{1_M}(y),\beta_{1_M}(x)) \in V \subset V \circ V \circ V \subset  E \subset U 
\]
for all $x \in X$.
This shows that the dynamical system $(X,M,\alpha)$ is topologically stable.
\end{proof}

\section{Dynamical characterization of subshifts of finite type}
\label{sec:sft}

\subsection{Shifts and subshifts}
\label{ss:shifts}
Let $S$ be a semigroup  and let $A$ be a finite set.
The elements of $A^S$ are called the \emph{configurations} over the semigroup $S$ and the \emph{alphabet} $A$.
\par
We equip $A^S = \prod_{s \in S} A$ with its prodiscrete topology. 
We also equip $A^S$ with the action of $S$ defined by $(s,x) \mapsto sx  \coloneqq  x \circ R_s$
for all $s \in S$ and $x \in A^S$, where $R_s \colon S \to S$ denotes the right multiplication by $s$.
Thus, the configuration $s x$ is given by $sx(s') = x(s' s)$ for all  $s' \in S$.
This action of $S$ on $A^S$ is clearly  continuous. It is called the $S$-\emph{shift}, or simply the \emph{shift}, on $A^S$.

\begin{remark}
In the case when $M$ is a monoid, the shift action of $M$ on $A^M$ is a monoid action since
$1_M x = x \circ R_{1_M} = x \circ \Id_M = x$ for all $x \in A^M$.
\end{remark}

A closed $S$-invariant subset $X \subset A^S$ is called a \emph{subshift}.

\begin{proposition}
\label{p:shift-expansive}
Let $S$ be a semigroup and let $A$ be a finite set with more than one element.
Then the following conditions are equivalent:
\begin{enumerate}[\rm (a)]
\item
the dynamical system $(A^S,S)$ is expansive;
\item
there exists a finite subset $K \subset S$ such that $S = KS$.
\end{enumerate}
\end{proposition}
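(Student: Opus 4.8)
The plan is to exploit the explicit description of the canonical uniform structure on the full shift $A^S$ provided by Remark~\ref{r:base-entour-prodiscrete}: taking $I = S$ and every factor equal to $A$, a base of entourages is formed by the cylinder sets $E_F \coloneqq \{(x,y) \in A^S \times A^S : x|_F = y|_F\}$, where $F$ ranges over the finite subsets of $S$. Since every entourage of $A^S$ contains some $E_F$, and since by Remark~\ref{r:subexpentourage} any entourage contained in an expansivity entourage is again an expansivity entourage, the system $(A^S,S)$ is expansive if and only if $E_F$ is an expansivity entourage for \emph{some} finite $F \subset S$. The whole statement then reduces to identifying exactly which finite sets $F$ give rise to an expansivity entourage $E_F$.

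The key step is to unwind the shift action on these cylinder entourages. For $s \in S$ and $f \in F$ one has $sx(f) = x(fs)$, so $(sx,sy) \in E_F$ holds precisely when $x$ and $y$ agree on the right-translate $Fs \coloneqq \{fs : f \in F\}$. Consequently there exists $s \in S$ with $(sx,sy) \notin E_F$ if and only if $x$ and $y$ differ at some point of $\bigcup_{s \in S} Fs = FS$. Taking contrapositives, $E_F$ is an expansivity entourage exactly when agreement of two configurations on $FS$ forces them to be equal.

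It remains to observe that, because $A$ has more than one element, this last condition is equivalent to $FS = S$. Indeed, if $FS = S$ then agreement on $FS$ is agreement everywhere, so $E_F$ is trivially an expansivity entourage; conversely, if $FS \neq S$, then picking $t \in S \setminus FS$ and two configurations coinciding everywhere except at $t$ (which exist since $|A| \geq 2$) produces distinct configurations agreeing on all of $FS$, so $E_F$ fails to be an expansivity entourage. Combining the three steps, (a) holds if and only if $FS = S$ for some finite $F \subset S$, which is precisely condition (b) with $K = F$. I expect the only delicate point to be the bookkeeping in the shift computation—correctly tracking that $(sx,sy) \in E_F$ corresponds to agreement on the right-translate $Fs$ rather than on $sF$—after which the reduction to the purely combinatorial identity $FS = S$ is immediate.
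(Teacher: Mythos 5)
Your proof is correct and follows essentially the same route as the paper's: both rest on the cylinder-set base of entourages, the computation $(sx)(f) = x(fs)$ identifying $(sx,sy) \in E_F$ with agreement on $Fs$, and configurations differing at a single point (using $|A| \geq 2$). The only difference is organizational—you package both implications into the single characterization ``$E_F$ is an expansivity entourage iff $FS = S$,'' whereas the paper argues the two directions separately—but the mathematical content is identical.
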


\begin{proof}
Suppose (a). Let $E$ be an expansivity entourage for $(A^S,S)$.
By Remark~\eqref{r:base-entour-prodiscrete}, there exists a finite subset $K \subset S$ such that
\begin{equation}
\label{e:K-cylinder-U}
\{(x,y) \in A^S \times A^S : x|_K = y|_K\} \subset E.
\end{equation}
Let $s \in S$.
As the set $A$ has more than one element, there exist $x,y \in A^S$ such that $x(s) \not= y(s)$ and $x(t) = y(t)$ for all $t \in S \setminus \{s\}$.
By the $E$-expansivity of  $(A^S,S)$, there exists $s' \in S$ such that $(s'x,s'y) \notin E$.
We then have    $(s'x)|_K \not= (s'y)|_K$ by~\eqref{e:K-cylinder-U}.
Thus, there exists $k \in K$ such that $(s'x)(k) \not= (s'y)(k)$, i.e., $x(ks') \not= y(ks')$.
By our choice of $x$ and $y$, this implies that $s = ks' $. 
As $s \in S$ was arbitrary, we
deduce that $S = KS$. This shows that (a) implies (b).
\par
Conversely, suppose (b).
Consider the entourage $E$ of $A^S$ defined by
\begin{equation*}
E \coloneqq \{(x,y) \in A^S \times A^S : x|_K = y|_K\}.
\end{equation*}
Let $x,y \in A^S$ such that $x \not= y$.
This means that  there exists $s \in S$ such that $x(s) \not= y(s)$.
   By (b), there exist $k \in K$ and $s' \in S$ such that $s = ks'$.
   We then have $(s'x)(k) = x(ks') = x(s) \not= y(s) = y(ks') = (s'y)(k)$.
   This implies $(s'x,s'y) \notin E$.
   Thus, $E$ is an expansivity entourage for $(A^S,S)$.
   This shows that (b) implies (a).
    \end{proof}

\begin{remark}
\label{rem:monoid-exp}
Condition~(b) is satisfied if $S$ admits a left-identity element $e$ by taking
$K \coloneqq \{e\}$.
In particular, all monoids satisfy (b).
\end{remark}

\begin{remark}
A \emph{right-zero-semigroup} is a semigroup $S$ such that $st = t$ for all $s,t \in S$. 
This amounts to saying  that every element in $S$ is a left-identity element
or that every element in $S$ is a right-zero element.
It follows from Remark~\ref{rem:monoid-exp} that all right-zero-semigroups satisfy Condition~(b): one can then take $K \coloneqq \{s\}$ for any $s \in S$.
\end{remark}

\begin{remark}
A \emph{regular semigroup} is a semigroup $S$ such that for every element $s \in S$ there exists an element $t \in S$ such that
$s = sts$. It is clear that every finitely generated regular semigroup $S$ satisfies Condition~(b): one can then take $K$ to be any finite generating subset for $S$. We thank Miguel Donoso Echenique for pointing this out to us.
\end{remark}

\begin{remark}
The direct product of two semigroups satisfying Condition~(b) also satisfies Condition~(b).
Indeed, if $S_1$ and $S_2$ are semigroups and $K_1 \subset S_1$ and $K_2 \subset S_2$  satisfy $K_1S_1 = S_1$ and $K_2S_2 = S_2$,
then $(K_1 \times K_2)(S_1 \times S_2) = K_1S_1 \times K_2S_2 = S_1 \times S_2$.
\end{remark}

\begin{remark}
\label{r:KS=S-sans-left-identity}
Let $S_1, \ldots, S_n$, $n \geq 2$, be mutually disjoint semigroups.
Set $S \coloneqq S_1 \sqcup  \cdots \sqcup S_n \sqcup \{z\}$, where
$z$ is a new element symbol, and define a binary operation $*$ on $S$ by setting,
for all $s,s' \in S$,
\[
s*s' \coloneqq 
\begin{cases}
ss' &\text{ if   there exists } i \in \{1,\dots,n\} \text{ such that }s,s' \in S_i,\\
 z & \text{ otherwise.}
 \end{cases}
 \]
It is straightforward that the operation $*$ is associative and that $z$ is a zero element of $S$.
Suppose that $S_i$ admits a left-identity element $e_i$ for each $i \in \{1, \ldots, n\}$.
Then the semigroup $S$ satisfies Condition~(b) by taking $K \coloneqq \{e_1, \ldots, e_n\}$.
Note that  there is no subset $K' \subset S$ such that $|K'| < |K|$
 and $K'S =S$. 
 This implies in particular that  $S$ does not contain any left-identity element.
\end{remark}

\begin{remark}
Condition (b) is not satisfied if $S S \subsetneqq S$, that is, if the binary operation $S \times S \to S$ is not surjective.
Examples of such semigroups are provided by the additive semigroup of positive integers and all null semigroups with more than one element.
\end{remark}

\subsection{Subshifts of finite type}
Let $S$ be a semigroup  and let $A$ be a finite set.
Given a finite subset $W \subset S$ and $P \subset A^W$, the subset $X \subset A^S$ defined by
\begin{equation}
\label{e:sft}
X \coloneqq \{x \in A^S : (sx)|_W \in P \text{ for all } s \in S\}
\end{equation}
is a subshift. One says that a subshift $X \subset A^S$ is \emph{of finite type} if there exist a finite subset $W \subset S$ and $P \subset A^W$ such that $X$ satisfies~\eqref{e:sft}.
One then says that $W$ is a \emph{defining window} for the subshift of finite type $X$.
Note that if $X \subset A^S$ is a subshift of finite type and $W \subset S$ is a defining window
then~\eqref{e:sft} is satisfied by taking $P \coloneqq X|_W$.   

\begin{proposition}
\label{p:sft-potp-if-KS}
Let $S$ be a semigroup and let $A$ be a finite set.
Suppose that there exists a finite subset $K \subset S$ such that $S = KS$.
Let $X \subset A^S$ be a subshift.
Suppose that  the dynamical system $(X,S)$ has the pseudo-orbit tracing property.
Then the subshift $X$ is of finite type.
\end{proposition}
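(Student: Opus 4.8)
The plan is to exhibit an explicit finite defining window $W$ and to show that $X$ coincides with the subshift it defines via $P := X|_W$. One inclusion is free: since $X$ is $S$-invariant, $z \in X$ forces $sz \in X$, hence $(sz)|_W \in X|_W = P$ for every $s$, whatever $W$ is. The entire content lies in the reverse inclusion, and this is precisely where the pseudo-orbit tracing property will be used.

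First I would fix the target entourage dictated by the hypothesis $S = KS$: take $U := \{(\xi,\eta) \in X \times X : \xi|_K = \eta|_K\}$, which is an entourage of $X$ by Remark~\ref{r:base-entour-prodiscrete}. Applying the pseudo-orbit tracing property to $U$ yields a finite subset $K' \subset S$ and an entourage $V$ such that every $(K',V)$-pseudo-orbit in $X$ is $U$-traced by some orbit. Since the sets ``agree on a finite subset'' form a base of entourages (again Remark~\ref{r:base-entour-prodiscrete}), I may shrink $V$ and assume $V = \{(\xi,\eta) : \xi|_F = \eta|_F\}$ for some finite $F \subset S$. I then propose the window $W := K \cup F \cup FK'$, which is finite because $K$, $F$, and $K'$ all are.

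The heart of the argument is to convert a hypothetical configuration into a genuine pseudo-orbit \emph{inside} $X$. Given $z \in A^S$ with $(sz)|_W \in P$ for all $s$, for each $s$ I choose $\xi_s \in X$ with $\xi_s|_W = (sz)|_W$. I would then check that $(\xi_s)_{s \in S}$ is a $(K',V)$-pseudo-orbit: for $k' \in K'$ and $f \in F$ the shift identity gives $(k'\xi_s)(f) = \xi_s(fk')$, and since $fk' \in FK' \subseteq W$ and $f \in F \subseteq W$, both $\xi_s(fk')$ and $\xi_{k's}(f)$ unwind through $\xi_{(\cdot)}|_W = ((\cdot)z)|_W$ to the single value $z(fk's)$; hence $(k'\xi_s)|_F = \xi_{k's}|_F$. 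By the tracing property there is $x \in X$ with $(sx)|_K = \xi_s|_K$ for all $s$, and using $K \subseteq W$ this reads $x(ks) = \xi_s(k) = (sz)(k) = z(ks)$ for every $k \in K$ and $s \in S$. The hypothesis $S = KS$ then says that every element of $S$ has the form $ks$, so $x = z$, whence $z = x \in X$.

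I expect the main obstacle to be the bookkeeping that makes a \emph{single} finite window $W$ simultaneously large enough for the pseudo-orbit condition (which needs $F \cup FK' \subseteq W$) and for the conclusion (which needs $K \subseteq W$ in order to read the tracing back through $sz$), together with the decisive use of $S = KS$ to upgrade agreement of $x$ and $z$ on all $K$-translates to global agreement. Without this hypothesis one would only control $x$ and $z$ on $KS$, so I anticipate that $S = KS$ is exactly the feature that lets the argument close.
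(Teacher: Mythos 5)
Your proposal is correct and follows essentially the same route as the paper's proof: the same target entourage $U$ built from $K$, the same window $W = K \cup F \cup FK'$ (the paper writes $K \cup H \cup HT$), the same construction of a $(K',V)$-pseudo-orbit $(\xi_s)_{s\in S}$ inside $X$ from a point $z$ of the candidate subshift of finite type, and the same use of $S = KS$ to conclude $z = x \in X$ from agreement on all $K$-translates. The only cosmetic difference is that you shrink $V$ to a basic entourage of the form $\{(\xi,\eta) : \xi|_F = \eta|_F\}$, whereas the paper keeps $V$ and chooses $H$ with the corresponding basic entourage contained in $V$; these are interchangeable.
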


\begin{proof}
Consider the entourage $U$ of $X$ defined by  
\[
U \coloneqq \{(x,y) \in X \times X : x\vert_K = y\vert_K\}.
\]
Since $(X,S)$ has the pseudo-orbit tracing property,
there exist a finite subset $T \subset S$ and an entourage $V$ of $X$ such that every $(T,V)$-pseudo-orbit in $X$ is $U$-traced by the orbit of some point of $X$.
Let $H \subset S$ be a finite subset such that
\begin{equation}
\label{e:H-sub-V-potp-1}
\{(x,y) \in X \times X : x|_H = y|_H \} \subset V.
\end{equation}
Consider the finite subset $W \coloneqq K \cup H \cup H T \subset S$ and the subshift of finite type
$Z \subset A^S$ (with defining window $W$) defined by
\[
Z \coloneqq \{z \in A^S : (sz)|_W \in X|_W \text{ for all } s \in S\}.
\]
Let us show that $X = Z$.
The inclusion $X \subset Z$ is obvious.
Indeed, every $x \in X$ satisfies $sx \in X$ and hance $(sx)|_W \in X|_W$ for all $s \in S$.
To prove the reverse inclusion, let $z \in Z$.
Then, for every $s \in S$, there exists a configuration $x_s\in X$ such that $(sz)|_W = x_s|_W$.
For all $t \in T$ and $h \in H$, we have
\begin{align*}
(t x_s)(h) &= x_s(h t) \\
&= (s z)(h t) && \text{(since $H T \subset W$)} \\
&= (tsz)(h) \\
&= x_{ts}(h) &&\text{(since $H \subset W$)}.    
\end{align*}
We deduce that $(tx_s)|_H = x_{ts}|_H$ for all $s \in S$ and $t \in T$.
By~\eqref{e:H-sub-V-potp-1}, this implies that the family $(x_s)_{s \in S}$ is a $(T,V)$-pseudo-orbit.
Consequently, there exists a configuration $x \in X$ whose orbit $U$-traces the family $(x_s)_{s \in S}$, i.e., such that
$(sx)(k) = x_s(k)$ for all $s \in S$ and $k \in K$.
As $K \subset W$, we deduce that $z(ks) = (sz)(k) = x_s(k) = (sx)(k) = x(ks)$ for all $k \in K$ and $s \in S$.
Since $KS = S$, this shows that $z = x \in X$.
We conclude  that $X = Z$ and hence that $X$ is of finite type.
\end{proof}

\begin{theorem}
\label{t:sft-potp-semigroup}
Let $S$ be a semigroup and let $A$ be a finite set.
Suppose that the semigroup $S$ admits a left-identity element (e.g., $S$ is a monoid).
Let $X \subset A^S$ be a subshift.
Then the following conditions are equivalent:
\begin{enumerate}[\rm (a)]
\item
the subshift $X$ is of finite type;
\item
the dynamical system $(X,S)$ has the pseudo-orbit tracing property.
\end{enumerate}
\end{theorem}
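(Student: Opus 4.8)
The plan is to treat the two implications separately, since one of them is already essentially available. For $(b)\implies(a)$: because $S$ admits a left-identity element $e$, we have $es=s$ for all $s\in S$, so $\{e\}S=S$; thus $S=KS$ with the finite set $K\coloneqq\{e\}$, and Proposition~\ref{p:sft-potp-if-KS} (cf.\ Remark~\ref{rem:monoid-exp}) applies directly to conclude that if $(X,S)$ has the pseudo-orbit tracing property then $X$ is of finite type. So all the genuine work lies in the converse.

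For $(a)\implies(b)$ I would first reduce to a convenient base of entourages. By Remark~\ref{r:base-entour-prodiscrete}, the sets $U_F\coloneqq\{(x,y)\in X\times X: x|_F=y|_F\}$, for $F$ ranging over finite subsets of $S$, form a base of entourages of $X$; since any entourage contains such a $U_F$, it suffices to exhibit, for each finite $F\subset S$, a finite $K\subset S$ and an entourage $V$ so that every $(K,V)$-pseudo-orbit is $U_F$-traced. Fix a defining window $W\subset S$ for the subshift of finite type $X$, so $X=\{x\in A^S:(sx)|_W\in X|_W\text{ for all }s\in S\}$. My proposal is to take $K\coloneqq F\cup W$ together with the base entourage $V\coloneqq\{(x,y)\in X\times X: x(e)=y(e)\}$ (this is the entourage associated with the one-point window $\{e\}$).

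Given a $(K,V)$-pseudo-orbit $(x_s)_{s\in S}$, I would define a configuration $x\in A^S$ by reading off the pseudo-orbit points at the single coordinate $e$: set $x(t)\coloneqq x_t(e)$ for all $t\in S$. The computation driving the argument is that, for $k\in K$ and $s\in S$, evaluating the shift at the coordinate $e$ gives $(kx_s)(e)=x_s(ek)=x_s(k)$, the last equality precisely because $e$ is a left-identity; meanwhile the pseudo-orbit relation $(kx_s,x_{ks})\in V$ says exactly $(kx_s)(e)=x_{ks}(e)$. Chaining these, $x_s(k)=(kx_s)(e)=x_{ks}(e)=x(ks)=(sx)(k)$ for every $k\in K$. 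Taking $k\in F$ yields $(sx)|_F=x_s|_F$ for all $s$, which is the desired $U_F$-tracing; taking $k\in W$ yields $(sx)|_W=x_s|_W$, and since every pseudo-orbit point $x_s$ lies in $X$ we have $x_s|_W\in X|_W$, whence $(sx)|_W\in X|_W$ for all $s$, i.e.\ $x\in X$. Thus the orbit of the point $x\in X$ traces the pseudo-orbit, establishing the converse.

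The step I expect to be the only real subtlety is verifying that the constructed point $x$ actually belongs to $X$. It is tempting to imagine $e$ acting trivially, but a left-identity of $S$ does \emph{not} act as the identity on $A^S$: since $(ex)(t)=x(te)$, trivial action would require a right-identity. What instead makes the argument work is that $e$ is transparent \emph{inside a coordinate}, through $ek=k$, so that evaluating configurations at the slot $e$ converts pseudo-orbit relations into tracing relations. Membership $x\in X$ is then secured precisely by having enlarged $K$ to contain the defining window $W$ and by using that each pseudo-orbit point $x_s$ is itself an admissible configuration of $X$.
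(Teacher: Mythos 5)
Your proposal is correct and takes essentially the same approach as the paper: for (b)$\implies$(a) both invoke Proposition~\ref{p:sft-potp-if-KS} via $K=\{e\}$, and for (a)$\implies$(b) the paper likewise defines the tracing point by $x(s) \coloneqq x_s(e)$, enlarges the finite pseudo-orbit set to contain the defining window $W$, and exploits the left-identity through $x_s(t) = x_s(et) = (tx_s)(e)$. The only cosmetic difference is your choice of $V$ as the single-coordinate entourage at $e$ (the paper uses an $H$-cylinder entourage with $e \in H$, but its argument also only ever uses the coordinate $e$ of the pseudo-orbit relation).
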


\begin{proof}
Let $e$ be a left-identity element of $S$.
The implication (b) implies (a) follows from Remark~\ref{rem:monoid-exp} and Proposition~\ref{p:sft-potp-if-KS}.
\par
Conversely, suppose (a) and let $W \subset S$ be a defining window for $X$.
Let $U$ be an entourage of $X$.
By Remark~\ref{r:base-entour-prodiscrete}, there exists a finite subset $H \subset S$ such that
\begin{equation}
\label{e:W-H-subset-U}
V \coloneqq \{(x,y) \in X \times X : x|_H = y|_H \} \subset U.
\end{equation}
Up to replacing $H$ by $H \cup \{e\}$, we may suppose that $e \in H$.
\par
Set $T \coloneqq W \cup H$ and suppose that a family $(x_s)_{s\in S}$ is a $(T,V)$-pseudo-orbit in $X$.
This means that
\begin{equation}
\label{e:xs-pseudo-orbit}
(tx_s)(h) = x_{ts}(h) \text{ for all } t \in T, h \in H, \text{ and } s \in S.
\end{equation}
Define the configuration $x \in A^S$ by setting $x(s) \coloneqq x_{s}(e)$ for all $s \in S$.
For all $t \in T$ and $s \in S$, we then have, on the one hand,
$(sx)(t) = x(ts) = x_{ts}(e)$ and, on the other hand, by using~\eqref{e:xs-pseudo-orbit} and recalling that $e \in H$,
$x_s(t) = x_s(et) = (t x_s)(e) = x_{ts}(e)$.
We deduce that
\begin{equation}
\label{e:sxt}
(sx)(t) = x_s(t) \quad \text{for all } t \in T \mbox{ and } s \in S.
\end{equation}
\par
Since $W \subset T$, it follows from~\eqref{e:sxt} that
\[
(sx)|_W = x_s|_W \in X|_W   \text{  for all } s \in S.
\]
As $W$ is a defining window for $X$, this shows that $x \in X$.
\par
Using now the fact that $H \subset T$, we deduce from~\eqref{e:sxt} that $(sx)|_H = x_s|_H$ for all $s \in S$.
By~\eqref{e:W-H-subset-U}, this implies that $(sx, x_s) \in U$ for all $s \in S$.
Thus, the family $(x_s)_{s \in S}$ is $U$-traced by the orbit of $x$.
This shows that $(X,S)$ has the pseudo-orbit tracing property
and hence that (a) implies (b).
\end{proof}

\begin{corollary}
\label{c:sft-w-top-stable}
Let $S$ be a semigroup admitting a left-identity element  and let $A$ be a finite set.
Suppose that $X \subset A^S$ is a subshift of finite type.
Then the dynamical system $(X,S)$ is  topologically semistable.
\end{corollary}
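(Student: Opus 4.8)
The plan is to deduce this directly from Theorem~\ref{t:main-semigroup}, which guarantees topological semistability of a dynamical system as soon as that system is both expansive and has the pseudo-orbit tracing property. Since $S$ admits a left-identity element, both of these hypotheses will follow from results already established, so the whole argument reduces to assembling the correct chain of implications.

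First I would dispose of the degenerate case $|A| \leq 1$. In this situation $A^S$ has at most one point, so the subshift $X$ is either empty or a singleton, and topological semistability holds trivially (the empty map, respectively the identity map, conjugates the action to itself, and any neighborhood $N$ works). Hence I may assume from now on that $A$ has more than one element, which is exactly the standing hypothesis of Proposition~\ref{p:shift-expansive}.

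To establish expansivity, I would first invoke Remark~\ref{rem:monoid-exp}: because $S$ possesses a left-identity element $e$, the finite subset $K \coloneqq \{e\}$ satisfies $S = KS$, so condition~(b) of Proposition~\ref{p:shift-expansive} is verified. The equivalence in that proposition then shows that the full shift $(A^S,S)$ is expansive. Since $X \subset A^S$ is by definition a closed $S$-invariant subset, Remark~\ref{rem:subsystem-exp} applies and yields that the restricted dynamical system $(X,S)$ is itself expansive.

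For the pseudo-orbit tracing property, I would apply Theorem~\ref{t:sft-potp-semigroup}, whose hypothesis that $S$ admits a left-identity element is precisely what we are given. As $X$ is a subshift of finite type, condition~(a) of that theorem holds, and the equivalence supplies condition~(b), namely that $(X,S)$ has the pseudo-orbit tracing property. With expansivity and the pseudo-orbit tracing property both in hand, Theorem~\ref{t:main-semigroup} applies and delivers the topological semistability of $(X,S)$, which completes the argument. I do not expect any genuine obstacle here; the only step requiring a moment's care is the reduction to the case $|A| > 1$, which is needed because Proposition~\ref{p:shift-expansive} is stated only for alphabets with more than one element.
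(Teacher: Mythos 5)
Your proof is correct and takes essentially the same route as the paper's: expansivity of $(X,S)$ via Remark~\ref{rem:monoid-exp}, Proposition~\ref{p:shift-expansive}, and Remark~\ref{rem:subsystem-exp}, the pseudo-orbit tracing property via the implication (a) $\implies$ (b) of Theorem~\ref{t:sft-potp-semigroup}, and the conclusion from Theorem~\ref{t:main-semigroup}. Your explicit treatment of the degenerate case $|A| \leq 1$ is a small point of care that the paper silently omits, but it does not change the structure of the argument.
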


\begin{proof}
This follows from Theorem~\ref{t:main-semigroup} and implication (a) $\implies$ (b) in Theorem~\ref{t:sft-potp-semigroup}
since $(X,S)$ is expansive by Remark~\ref{rem:monoid-exp}, Proposition~\ref{p:shift-expansive},
and Remark~\ref{rem:subsystem-exp}.
\end{proof}

\begin{proof}[Proof of Corollary~\ref{c:sft-top-stab}]
This follows from Theorem~\ref{t:main} and  Theorem~\ref{t:sft-potp}
since $(X,M)$ is expansive by Proposition~\ref{p:shift-expansive},
Remark~\ref{rem:monoid-exp}, and Remark~\ref{rem:subsystem-exp}.
\end{proof}

\section{Equicontinuous actions and the pseudo-orbit tracing property}
\label{sec:equicontinuous}

\begin{definition}[Equicontinuity]
Let $X$ be a compact Hausdorff space equipped with an  action of a semigroup $S$. 
One says that the action of $S$ on $X$  is \emph{equicontinuous} if for every entourage $U$ of $X$ 
there exists an entourage $V$ of $X$ such that for all $x, y \in X$, $(x, y) \in V$ implies $(sx, sy) \in U$  for all $s \in S$.
\end{definition}

It is clear that every equicontinuous action of a semigroup  on a compact Hausdorff space  is continuous.

\begin{example}[Uniformly Lipschitz actions]
\label{ex:unif-lipshcitz-equi}
Let $(X,d)$ be a compact metric space equipped with an action of a semigroup $S$.
Suppose that  the action of $S$ on $(X,d)$ is $L$-\emph{Lifschitz} for some real number $L > 0$, i.e., 
$d(sx,sy) \leq L d(x,y)$ for all $x,y \in X$ and $s \in S$.
Then   the action of $S$ on $X$  is equicontinuous.
Indeed, given an entourage $U$ of $X$,  there exists $\varepsilon > 0$ such that
$W \coloneqq \{(x,y) \in X \times X : d(x,y) \leq \varepsilon\} \subset U$ 
and the entourage $V \coloneqq \{(x,y) \in X \times X : d(x,y) \leq \varepsilon/L\}$ satisfies
$(sx,sy) \in W \subset U$ for all $(x,y) \in V$ and $s \in S$, showing equicontinuity.
 \end{example}

\begin{example}[Isometric actions]
\label{ex:isom-equi}
Let $(X,d)$ be a compact metric space equipped with an action of a semigroup $S$.
Suppose that  the action of $S$ on $X$ is \emph{isometric}, i.e.,  
$d(sx,sy) =  d(x,y)$ for all $x,y \in X$ and $s \in S$.
Then the action of $S$ on $X$  is $1$-Lifschitz  and hence equicontinuous by 
Example~\ref{ex:unif-lipshcitz-equi}.
\end{example}

\begin{example}[Profinite actions]
\label{ex:profinite-actions}
Let $(A_i)_{i \in I}$ be a family of finite sets indexed by a set $I$
and suppose that a semigroup $S$ acts on the set $A_i$ for every $i \in I$.
Then $S$ acts diagonally on $X \coloneqq \prod_{i \in I} A_i$ by setting $(sx)(i) \coloneqq sx(i)$ for all $s \in S$ and $x \in X$.
This action is equicontinuous with respect to the prodiscrete topology on $X$.
Indeed, if $U$ is an entourage of $X$, it follows from Remark~\ref{r:base-entour-prodiscrete} that there exists a finite subset $K \subset I$ such that
\[
V \coloneqq  \{(x,y) \in X \times X : x|_K = y|_K \} \subset U.
\]
We then have $(sx,sy) \in V \subset U$ for all $(x,y) \in V$ and $s \in S$, showing equicontinuity.
\end{example}

\begin{proposition}
\label{p:not-exp-if-infinite}
Let $X$ be an infinite compact Hausdorff space equipped with an equicontinuous action of a semigroup $S$.
Then the dynamical system $(X,S)$ is not expansive.
\end{proposition}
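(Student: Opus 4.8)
The plan is to argue by contradiction, showing that expansivity together with equicontinuity would force the topology of $X$ to be discrete, which is incompatible with $X$ being infinite and compact.

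First I would suppose that $(X,S)$ is expansive and fix an expansivity entourage $E$ for $(X,S)$. Applying the definition of equicontinuity to the entourage $U \coloneqq E$, I obtain an entourage $V$ of $X$ with the property that $(x,y) \in V$ implies $(sx,sy) \in E$ for all $s \in S$.

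The crucial step is to observe that $V$ must be contained in the diagonal $\Delta_X$. Indeed, suppose $(x,y) \in V$ with $x \neq y$; then $(sx,sy) \in E$ for every $s \in S$, which contradicts the fact that $E$ is an expansivity entourage for $(X,S)$, since $E$-expansivity requires the existence of some $s \in S$ with $(sx,sy) \notin E$. Hence $(x,y) \in V$ forces $x = y$, so $V \subset \Delta_X$. As every entourage contains the diagonal, we conclude $V = \Delta_X$, and therefore $\Delta_X$ is itself an entourage of $X$.

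Finally I would translate this into a topological statement. If $\Delta_X$ is an entourage, then for each $x \in X$ the singleton $\{x\} = \{y \in X : (x,y) \in \Delta_X\}$ is open in the topology associated with the uniform structure, so $X$ is discrete. A discrete compact space is finite, contradicting the hypothesis that $X$ is infinite. The only mildly delicate point is the passage from \emph{the diagonal is an entourage} to \emph{the topology is discrete}; everything else is a direct unwinding of the definitions of expansivity entourage and equicontinuity.
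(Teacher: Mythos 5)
Your proof is correct and follows essentially the same route as the paper: both arguments hinge on the observation that the equicontinuity entourage $V$ associated with an expansivity entourage would have to equal $\Delta_X$, that $\Delta_X$ being an entourage forces the topology to be discrete, and that an infinite compact space cannot be discrete. The only difference is presentational — you argue by contradiction, while the paper proves the contrapositive directly (for each entourage $U$, the system is not $U$-expansive) — but the ideas coincide.
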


\begin{proof}
Let $U$ be an entourage of $X$.
By equicontinuity, there exists an entourage $V$ of $X$ such that $(x,y) \in V$ implies $(sx,sy) \in U$ for all $s \in S$.
We have $V \not= \Delta_X$ since otherwise the topology on $X$ would be the discrete one, contradicting the hypothesis that $X$ is infinite compact.
Consequently, there exist distinct points $x,y \in X$ such that  $(sx,sy) \in U$ for all $s \in S$.
This shows that $(X,S)$ is not $U$-expansive.
As the entourage $U$ was arbitrary, we conclude that $(X,S)$ is not expansive.
\end{proof}

The following result extends Theorem~4.1 in~\cite{chung-lee}.

\begin{theorem}
\label{t:equicont-potp}
Suppose that a Stone space $X$ is equipped with an  equicontinuous action of a finitely generated monoid $M$.
Then the dynamical system $(X,M)$ has the pseudo-orbit tracing property.  
\end{theorem}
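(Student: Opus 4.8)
The plan is to reduce, via the finite-generation hypothesis, to checking the tracing condition only along a fixed finite generating set, and then to exploit two special features of the present situation: the uniform control provided by equicontinuity, and the fact that on a Stone space the basic entourages are equivalence relations. First I would fix a finite subset $\Sigma \subset M$ generating $M$ as a monoid and replace it by $\Sigma \cup \{1_M\}$, so that $\Sigma$ generates $M$ as a semigroup and $1_M \in \Sigma$. By Proposition~\ref{p:potp-fg}, it then suffices to show that for every entourage $U$ of $X$ there is an entourage $V$ of $X$ such that every $(\Sigma,V)$-pseudo-orbit in $X$ is $U$-traced by the orbit of some point.

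Next I would realize $X$ as a closed subspace of a product $\prod_{i \in I} A_i$ of finite discrete sets and invoke Remark~\ref{r:base-entour-prodiscrete}: the sets $E_K \coloneqq \{(x,y) \in X \times X : x|_K = y|_K\}$, for $K$ a finite subset of $I$, form a base of entourages. The decisive observation is that each $E_K$ is an equivalence relation, so that $E_K \circ E_K = E_K$; this is what will prevent the tracing error from accumulating. Given $U$, choose a finite $K$ with $E_K \subset U$, and apply the definition of equicontinuity to the entourage $E_K$ to obtain an entourage $V$ with the property that $(x,y) \in V$ implies $(tx,ty) \in E_K$ \emph{for every} $t \in M$ simultaneously. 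This uniformity over all of $M$ is exactly the strength that equicontinuity supplies.

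Now, given a $(\Sigma,V)$-pseudo-orbit $(x_s)_{s \in M}$, the candidate tracing point is $x_{1_M}$, and the claim to prove is $(s\, x_{1_M}, x_s) \in E_K$ for every $s \in M$; this is where the monoid identity is used. I would argue by telescoping along a factorization $s = \sigma_n \cdots \sigma_1$ with $\sigma_j \in \Sigma$. Writing $p_j \coloneqq \sigma_j \cdots \sigma_1$ (with $p_0 \coloneqq 1_M$) and
\[
w_j \coloneqq (\sigma_n \cdots \sigma_{j+1})\, x_{p_j} \qquad (0 \le j \le n),
\]
one has $w_0 = s\,x_{1_M}$ and $w_n = x_s$. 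For each $j$ the pseudo-orbit relation gives $(\sigma_j x_{p_{j-1}}, x_{p_j}) \in V$, and applying equicontinuity with $t = \sigma_n \cdots \sigma_{j+1}$ yields $(w_{j-1}, w_j) \in E_K$. Since $E_K$ is transitive, chaining these $n$ relations gives $(w_0, w_n) = (s\,x_{1_M}, x_s) \in E_K \subset U$, as desired. Finally, Proposition~\ref{p:potp-fg} converts this into the full pseudo-orbit tracing property.

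The step I expect to be the crux is the one just described: in a general compact metric space, chaining $n$ per-step errors would blow up with $n$, and the argument would fail. It works here precisely because the basic entourages $E_K$ of a Stone space are equivalence relations, so that $E_K \circ \cdots \circ E_K = E_K$ regardless of the word length $n$; combined with the uniform-in-$t$ estimate from equicontinuity, this keeps the single tracing point $x_{1_M}$ within $E_K$ of the entire pseudo-orbit.
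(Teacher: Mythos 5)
Your proposal is correct and follows essentially the same route as the paper's proof: realize $X$ in a product of finite sets, apply equicontinuity to the basic entourage $E_K$ to get a $V$ uniform over all of $M$, take $x_{1_M}$ as the tracing point, and telescope along a word in the generators, with the non-accumulation of errors coming from the fact that $(x,y)\in E_K$ means $x|_K = y|_K$ (i.e., $E_K$ is an equivalence relation). The only cosmetic difference is that you route the conclusion through the easy direction of Proposition~\ref{p:potp-fg}, whereas the paper applies the definition of the pseudo-orbit tracing property directly with $K=\Sigma$.
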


\begin{proof}
As every Stone space is homeomorphic to a closed subspace of a product of finite discrete spaces,
we can assume that there is a family $(A_i)_{i \in I}$ of finite sets such that $X$ 
is a closed subset of $\prod_{i \in I} A_i$ for the prodiscrete topology. Let $U$ be an entourage of $X$.
By Remark~\ref{r:base-entour-prodiscrete}, there exists a finite subset $K \subset I$ such that
the entourage $W$ of $X$ defined by
$W \coloneqq \{(x,y) \in X \times X : x|_K = y|_K \}$ satisfies
\begin{equation}
\label{e:W-K-subset-U-5}
W \subset U.
 \end{equation}
Since the action of $M$ on $X$ is equicontinuous, there exists an entourage $V$ of $X$ such that
$(x,y) \in V$ implies $(m x,m y) \in W$ for all $x,y \in X$ and $m \in M$.
\par
 Let $\Sigma$ be a finite generating subset of $M$.
\par
Suppose that a family $(x_m)_{m \in M}$ of points of $X$ is a $(\Sigma,V)$-pseudo-orbit in $X$.
This means that, for all $\sigma \in \Sigma$ and $m \in M$,
we have $(\sigma x_m , x_{\sigma m}) \in V$.
By our choice of $V$, this implies $(m'\sigma x_m , m'x_{\sigma m}) \in W$ for all $m'\in M$, that is,
\begin{equation}
\label{e:x-m-po-sigma-m'}
(m'\sigma x_m )|_K =  (m'x_{\sigma m})|_K \quad \text{for all } m,m' \in M \text{ and } \sigma \in \Sigma.
\end{equation}  
Consider now the point $x \coloneqq x_{1_M} \in X$.
We claim that the family $(x_m)_{m \in M}$ is $U$-traced by the orbit of $x$.
To see this, let $m \in M$.
Since $\Sigma$ generates $M$, there exist a integer $n \geq 0$ and $\sigma_1,\dots,\sigma_n \in \Sigma$ such that $m = \sigma_1 \cdots \sigma_n$.
By successive applications of~\eqref{e:x-m-po-sigma-m'}, we get
\begin{align*}
(mx)|_K 
&= (\sigma_1 \cdots \sigma_{n-1} \sigma_n x_{1_M})|_K \\
&= (\sigma_1 \cdots \sigma_{n-1}  x_{\sigma_n})|_K \\
&= (\sigma_1 \cdots \sigma_{n-2}  x_{\sigma_{n_1}\sigma_n})|_K \\
&= \dots \\
 &= x_{\sigma_1 \cdots \sigma_{n-1} \sigma_n}|_K = x_{m}|_K.
\end{align*}
We deduce that $(mx,x_m) \in W$ for all $m \in M$.
 As $W \subset U$ by~\eqref{e:W-K-subset-U-5}, this proves our claim.
It follows that $(X,M)$ has the pseudo-orbit tracing property.
\end{proof}

\begin{remark}
For an arbitrary compact Hausdorff space $X$, Theorem~\ref{t:equicont-potp} becomes false in general.
For example, given any $\theta \in \R$,   the action  $\alpha$ of the additive group $\Z$ on the circle $\sph^1 \coloneqq \R/\Z$  generated by the rotation of angle $\theta$, i.e., 
defined by $\alpha_n(x) \coloneqq x + n\theta \mod 1$ for all $n \in \Z$ and $x \in \sph^1$,
is  equicontinuous but does not have the pseudo-orbit tracing property
(cf.~\cite[Exercise~5.3.4]{brin-stuck}).
\end{remark}

As a product of finite sets is a Stone space for the prodiscrete topology, an immediate consequence of Theorem~\ref{t:equicont-potp} is the following.

\begin{corollary}
\label{c:equicont-potp}
Let $(A_i)_{i \in I}$ be a family of finite sets indexed by a set $I$. 
Suppose that the set  $X \coloneqq \prod_{i \in I} A_i$,  with its prodiscrete topology,
is equipped with an  equicontinuous action of a finitely generated monoid $M$.
Then the dynamical system $(X,M)$ has the pseudo-orbit tracing property.
\qed  
\end{corollary}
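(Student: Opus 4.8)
The plan is to obtain this statement as a direct instantiation of Theorem~\ref{t:equicont-potp}. The only point that genuinely needs checking is that the ambient space $X \coloneqq \prod_{i \in I} A_i$, equipped with its prodiscrete topology, falls within the scope of that theorem, i.e., that it is a Stone space.

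First I would recall from the subsection on prodiscrete spaces that, being a product of finite discrete sets, $X$ with the prodiscrete topology is compact, Hausdorff, and totally disconnected, since each of these three properties is preserved under arbitrary products. These are precisely the defining properties of a Stone space, so $X$ is a Stone space. One should also note that the prodiscrete topology appearing in the statement coincides with the Stone-space structure invoked by Theorem~\ref{t:equicont-potp}; this is exactly the content recorded in the prodiscrete-spaces subsection, so there is nothing further to verify on this front.

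With this observation in hand, the hypotheses of Theorem~\ref{t:equicont-potp} are met verbatim: $X$ is a Stone space carrying an equicontinuous action of the finitely generated monoid $M$. Applying that theorem then yields immediately that $(X,M)$ has the pseudo-orbit tracing property, which completes the proof. I do not anticipate any real obstacle here, as the corollary is purely a specialization of the preceding theorem to the concrete case of products of finite sets; the entire argument reduces to identifying the prodiscrete product as a Stone space.
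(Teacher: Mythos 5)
Your proposal is correct and matches the paper's proof exactly: the paper likewise deduces the corollary from Theorem~\ref{t:equicont-potp} by observing that a product of finite sets with the prodiscrete topology is compact, Hausdorff, and totally disconnected, hence a Stone space. Nothing further is needed.
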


\section{Rooted tree endomorphism semigroups}
\label{sec:rooted-trees}

A \emph{rooted tree} is a sequence $T = (V_n,\pi_n)_{n \in \N}$, where $V_0$ is a singleton, $V_{n + 1}$ is a finite set, $|V_{n + 1}| > |V_n|$  and $\pi_n \colon V_{n + 1} \to V_n$ is a surjective  map for every $n \in \N$.
The unique element of  $V_0$ is called the \emph{root} of   $T$ and the elements of the set $V_n$, $n \in \N$,  are called the \emph{vertices of level $n$} of $T$.
A \emph{ray} of the rooted tree $T$ is a sequence $(v_n)_{n \in \N}$ such that $v_n \in V_n$ and $v_n = \pi_n(v_{n + 1})$ for all $n \in \N$.
The \emph{boundary} of the rooted tree $T$ is the subset  $\partial T \subset P \coloneqq \prod_{n \in \N} V_n$  consisting of all the rays of $T$.
We equip $\partial T$ with the topology induced by the prodiscrete topology on $P$.
It is clear that $\partial T$ is an infinite  closed subset of $P$.
Therefore $\partial T$ is an infinite  Stone space.
In fact, $\partial T$ is the limit of the inverse sequence of finite discrete spaces $(V_n,\pi_n)_{n \in \N}$.
The  metric $d$ on $\partial T$ defined, for all rays $\xi,\eta \in \partial T$,  by
setting 
\begin{equation}
\label{e:distance-frontiere}
d(\xi,\eta) \coloneqq
\begin{cases}
0  & \text{ if } \xi = \eta, \\
2^{- \inf\{n \in \N: \xi(n) \not= \eta(n)\}} & \text{ if } \xi \not= \eta,
\end{cases}
\end{equation}
 is compatible with the topology.
 As $\partial T$ is a non-empty metrizable Stone space without isolated point, it is homeomorphic to the triadic Cantor set. 
 \begin{example}
Let $k \geq 2$ be  an integer and set $A \coloneqq \{0,1,\dots,k-1\}$. 
The \emph{$k$-regular rooted tree} is the rooted tree $T_k \coloneqq  (A^n,\pi_n)_{n \in \N}$, 
where $\pi_n  \colon A^{n +1} = A^n \times A \to A^n$ is 
the projection map.
Observe that $\partial T_k$ can be identified with $A^{\N}$
via the map that sends every ray $(v_n)_{n \in \N}$
to the map $\rho \colon \N \to A$, where $\rho(n)$ equals the last coordinate of $v_{n + 1} \in A^{n+1}$ for every $n \in \N$.
With this identification, the topology on $\partial T_k = A^{\N}$ is the prodiscrete topology. 
 \end{example}
 
 Let $T = (V_n,\pi_n)_{n \in \N}$ be a rooted tree.
 One says that a sequence  $f = (f_n)_{n \in \N}$, where $f_n \in \Map(V_n)$ for every $n \in \N$,  is an \emph{endomorphism} of the rooted tree $T$ if
one has  $\pi_n \circ f_{n + 1} = f_n \circ \pi_n$ for all $n \in \N$.
 Every endomorphism $f = (f_n)_{n \in \N}$ of $T$ induces a continuous map $\partial f \colon \partial T \to \partial T$ defined by $\partial f(\xi) \coloneqq (f_n(\xi(n))_{n \in \N}$ for all
 $\xi \in \partial T$.
 The set $\End(T)$ consisting of all endomorphisms of $T$ is a monoid for the binary operation defined by
 $fg \coloneqq (f_n \circ g_n)_{n \in \N}$ for all $f= (f_n)_{n \in \N}, g = (g_n)_{n \in \N} \in \End(T)$.
 The subgroup of $\End(T)$ consisting of all invertible elements of $\End(T)$ is called the \emph{automorphism group} of $T$ and denoted by $\Aut(T)$.
 The  monoid $\End(T)$  acts  on $\partial T$ by setting $f \xi \coloneqq \partial f(\xi)$ for all $f \in \End(T)$ and $\xi \in \partial T$.
 This action is  $1$-Lipschitz with respect to the metric $d$ and therefore equicontinuous.
 Thus, we deduce from Proposition~\ref{p:not-exp-if-infinite} the following result.

\begin{proposition}
\label{p:exp-on-boundary-tree}
Let $T = (V_n,\pi_n)_{n \in \N}$ be a rooted tree and let $S$ be a subsemigroup of $\End(T)$.
Then the action of  $S$ on $\partial T$ is not expansive.
\qed
\end{proposition}

By applying Theorem~\ref{t:equicont-potp}, we obtain the following.

\begin{theorem}
\label{t:boundary-tree}
 Let $T$ be a rooted tree and let $M$ be a finitely generated submonoid of  $\End(T)$.
 Then the action of $M$ on  $\partial T$   has the pseudo-orbit tracing property.
 \qed
\end{theorem}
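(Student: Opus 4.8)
The plan is to deduce Theorem~\ref{t:boundary-tree} as a direct application of Theorem~\ref{t:equicont-potp}, so the entire argument reduces to verifying that its three hypotheses hold for the dynamical system $(\partial T, M)$. Recall that Theorem~\ref{t:equicont-potp} asserts that an equicontinuous action of a finitely generated monoid on a Stone space has the pseudo-orbit tracing property. Since $M$ is assumed to be a finitely generated submonoid of $\End(T)$, the finite-generation hypothesis is immediate. Thus only two facts remain to be checked: that $\partial T$ is a Stone space, and that the action of $M$ on $\partial T$ is equicontinuous.

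First I would recall from the discussion preceding Proposition~\ref{p:exp-on-boundary-tree} that $\partial T$ is a closed subset of the product $P = \prod_{n \in \N} V_n$ of finite discrete sets, hence a Stone space (indeed, it is remarked there that $\partial T$ is an infinite Stone space, homeomorphic to the Cantor set). This takes care of the ambient-space hypothesis. Next I would invoke the observation, also made in the paragraph introducing $\End(T)$, that the action of $\End(T)$ on $\partial T$ via $f\xi \coloneqq \partial f(\xi)$ is $1$-Lipschitz with respect to the metric $d$ defined in~\eqref{e:distance-frontiere}. By Example~\ref{ex:unif-lipshcitz-equi} (the uniformly Lipschitz case), any $L$-Lipschitz action is equicontinuous; in particular the $1$-Lipschitz action of $\End(T)$ is equicontinuous. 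Since equicontinuity of an action of a larger semigroup restricts to equicontinuity of any subsemigroup action — the same entourage $V$ furnished for all of $\End(T)$ works a fortiori for the elements of $M$ — the action of the submonoid $M$ on $\partial T$ is equicontinuous.

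With these two verifications in hand, I would simply apply Theorem~\ref{t:equicont-potp} to the equicontinuous action of the finitely generated monoid $M$ on the Stone space $\partial T$ to conclude that $(\partial T, M)$ has the pseudo-orbit tracing property, which is exactly the assertion.

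I do not expect any genuine obstacle here: the statement is a corollary whose content lies entirely in matching the already-established structural facts about $\partial T$ and $\End(T)$ to the hypotheses of Theorem~\ref{t:equicont-potp}. The only point requiring a moment's care is the restriction of equicontinuity from $\End(T)$ to the submonoid $M$, but this is immediate since the definition of equicontinuity quantifies over the acting semigroup and a smaller index set of group elements only weakens the requirement. Consequently the proof is essentially a one-line invocation, and indeed the statement is marked with \qed in the excerpt, confirming that no separate proof is needed beyond this observation.
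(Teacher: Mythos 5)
Your proposal is correct and follows exactly the paper's route: the theorem is stated immediately after the observation that the $\End(T)$-action on the Stone space $\partial T$ is $1$-Lipschitz (hence equicontinuous), and it is obtained as a direct application of Theorem~\ref{t:equicont-potp}, which is why the paper supplies no separate proof. Your only additional remark---that equicontinuity of the full $\End(T)$-action restricts to the submonoid $M$---is a correct and harmless elaboration of a point the paper leaves implicit.
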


Given a rooted tree $T = (V_n,\pi_n)_{n \in \N}$, the monoid $\End(T)$ naturally acts on every $V_n$.
This action is given by $fv = f_n(v)$ for all $f = (f_m)_{m \in \N} \in \End(T)$ and $v \in V_n$.
One says that a subsemigroup $S$ of $\End(T)$ is \emph{level-transitive} if the action of $S$ on $V_n$ is transitive for
every $n \in \N$.

\begin{proposition} 
\label{p:semi-nekra}
(cf.~\cite[Proposition~2.4.2]{nekrashevych})
Let $T = (V_n,\pi_n)_{n \in \N}$ be a rooted tree and let $S$ be a subsemigroup of $\End(T)$.
Then the following conditions are equivalent:
\begin{enumerate}[{\rm (a)}]
\item $S$ is level-transitive;
\item the dynamical system $(\partial T, S)$ is minimal.
\end{enumerate}
\end{proposition}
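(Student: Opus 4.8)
The plan is to exploit the compatibility between the action of $S$ on the levels $V_n$ and its action on the boundary $\partial T$. For each $n \in \N$, consider the projection $p_n \colon \partial T \to V_n$, $\xi \mapsto \xi(n)$. Since $(s\xi)(n) = s_n(\xi(n)) = s \cdot \xi(n)$ for all $s \in S$ and $\xi \in \partial T$, the map $p_n$ is $S$-equivariant; it is also continuous, and it is surjective because every $\pi_n$ is surjective, so that any vertex of level $n$ can be completed to a ray. I would record first that the clopen cylinders $C(v) \coloneqq p_n^{-1}(v) = \{\xi \in \partial T : \xi(n) = v\}$, with $n \in \N$ and $v \in V_n$, form a base of nonempty open sets for the topology of $\partial T$: a basic open set of the prodiscrete topology fixes finitely many coordinates, and on a ray the value at the highest prescribed level determines all lower values via the $\pi_m$, so that the trace on $\partial T$ of such a set is either empty or equal to some $C(v)$, the latter being nonempty by surjectivity of the bonding maps.

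For the implication (a) $\implies$ (b), assume $S$ is level-transitive. To prove minimality it suffices to show that every orbit $S\xi$ meets every nonempty basic open set. Given $\xi \in \partial T$ and a cylinder $C(v)$ with $v \in V_n$, level-transitivity at level $n$ provides $s \in S$ with $s \cdot \xi(n) = v$; equivariance of $p_n$ then yields $(s\xi)(n) = v$, i.e.\ $s\xi \in C(v)$. Hence $S\xi$ is dense and $(\partial T, S)$ is minimal.

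For (b) $\implies$ (a), assume $(\partial T, S)$ is minimal and fix $n \in \N$ together with $u, v \in V_n$. Using surjectivity of $p_n$, choose a ray $\xi$ with $\xi(n) = u$. By minimality $S\xi$ is dense, so it meets the nonempty open set $C(v)$; any $s \in S$ with $s\xi \in C(v)$ satisfies $s \cdot u = s \cdot \xi(n) = (s\xi)(n) = v$. As $u, v$ and $n$ are arbitrary, the action of $S$ on each $V_n$ is transitive, so $S$ is level-transitive.

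The argument is short once the right viewpoint is in place, so I do not expect a serious obstacle; the only points requiring care are structural rather than computational, namely verifying that the cylinders $C(v)$ are exactly the nonempty basic open sets of $\partial T$ (where the ray condition collapses a finite coordinate constraint to a single top-level one) and that each such cylinder is nonempty (which is precisely where the surjectivity of the bonding maps $\pi_n$ is used). Establishing the $S$-equivariance and surjectivity of the projections $p_n$ at the outset is what makes both implications immediate.
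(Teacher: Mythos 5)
Your proof is correct and is essentially the paper's own argument: both rest on the observation that agreement at level $n$ (equivalently, membership in a common cylinder) determines all lower levels, so that level-transitivity translates directly into orbits meeting every cylinder and conversely. The only difference is presentational — the paper phrases proximity via the metric $d$ with thresholds $2^{-n}$, while you work with the equivariant projections $p_n$ and the cylinder base of the topology, which encode exactly the same information.
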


\begin{proof}
Suppose (a). Let $\xi, \eta \in \partial T$ and $\varepsilon > 0$. 
Let $n \in \N$ such that $2^{-n} \leq \varepsilon$ and consider
the vertices $\xi(n), \eta(n) \in V_n$. 
As the action of $S$ on $V_n$ is transitive by (a), there exists  $s   \in S$
such that $s\xi(n) = \eta(n)$. 
By~\eqref{e:distance-frontiere}, this implies  $d(s\xi, \eta) < 2^{-n} \leq \varepsilon$. This shows that the $S$-orbit of $\xi$ is dense in $\partial T$.
The implication (a) $\implies$ (b) follows.
\par
Conversely, suppose (b).
Let $n \in \N$ and $v,w \in V_n$.
Choose rays $\xi,\eta \in \partial T$ such that $v = \xi(n)$ and $w = \eta(n)$.
Since the $S$-orbit of $\xi$ is dense in $\partial T$ by (b),
there exists $s \in S$ such that
$d(s\xi,\eta) < 2^{-n}$.
This implies that  $s\xi(n) = \eta(n)$ and hence $sv = w$.
We deduce that the action of $S$ on $V_n$ is transitive.
This shows that (b) implies (a).
 \end{proof}

As the boundary of every rooted tree is infinite, an immediate consequence of implication (a) $\implies$ (b) in Proposition~\ref{p:semi-nekra}
is the following.

\begin{corollary}
\label{c:level-trans-inf-orbits}
Let $T$ be a rooted tree and let $S$ be a level-transitive subsemigroup of $\End(T)$.
Then every $S$-orbit in $\partial T$ is infinite.
\qed
\end{corollary}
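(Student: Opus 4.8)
The plan is to derive this directly from Proposition~\ref{p:semi-nekra} together with a general topological observation. First I would invoke the implication (a) $\implies$ (b) of Proposition~\ref{p:semi-nekra}: since $S$ is assumed level-transitive, the dynamical system $(\partial T, S)$ is minimal. By the definition of minimality recorded earlier, this means precisely that the $S$-orbit $\{s\xi : s \in S\}$ of every point $\xi \in \partial T$ is dense in $\partial T$.

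Next I would recall that $\partial T$ is an infinite Hausdorff space. This was already noted in the discussion preceding the statement: $\partial T$ is a non-empty Stone space homeomorphic to the triadic Cantor set, hence in particular infinite and (being a Stone space, or indeed as a subspace of the prodiscrete space $P$) Hausdorff. So no new topological input about $\partial T$ is required beyond what has been established.

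The only substantive step is then the elementary fact that a dense subset of an infinite Hausdorff space must itself be infinite. I would argue by contradiction: suppose some orbit $D \coloneqq \{s\xi : s \in S\}$ were finite. In a Hausdorff space every finite subset is closed, so $D$ would be closed, whence $D = \overline{D}$. Combined with the density $\overline{D} = \partial T$ supplied by minimality, this would force $D = \partial T$, contradicting the infiniteness of $\partial T$. Therefore each orbit is infinite, as claimed.

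I do not anticipate any serious obstacle here: the entire dynamical content is carried by Proposition~\ref{p:semi-nekra}, and the remaining argument is a routine separation-axiom computation. The one point meriting a moment's care is simply confirming that $\partial T$ is genuinely infinite and Hausdorff, but both facts are guaranteed by the earlier identification of $\partial T$ with a Cantor set, so this causes no difficulty.
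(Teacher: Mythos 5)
Your proof is correct and follows exactly the paper's route: the paper also derives the corollary as an immediate consequence of implication (a) $\implies$ (b) of Proposition~\ref{p:semi-nekra} together with the infiniteness of $\partial T$, merely leaving implicit the elementary step (finite sets are closed in a Hausdorff space, so a finite dense subset would equal the whole space) that you spell out.
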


\begin{remark}
Let $S$ and $S'$ be two level-transitive subsemigroups of $\End(T_2)$ (e.g.,~$S = S' = \End(T_2)$). 
We can embed the product semigroup $S \times S'$ into $\End(T_2)$ by setting $(s,s')\vert_{A^0} \coloneqq \Id_{A^0}$ and
\[
(s,s')(a_1, \ldots, a_n) \coloneqq \begin{cases}
(a_1, s(a_2, \ldots,a_n)) & \mbox{ if } a_1 = 0\\
(a_1, s'(a_2, \ldots,a_n)) & \mbox{ if } a_1 = 1
\end{cases}
\]
for all $(s,s') \in S \times S'$ and $(a_1, \ldots, a_n) \in A^{n}$, for all $n \geq 1$.
It is clear that the semigroup $(S \times S') \subset \End(T_2)$ is not level-transitive. 
However, the $(S \times S')$-orbit of every point $\xi \in \partial T_2$ is infinite.
This shows that the converse of the implication stated in  Corollary~\ref{c:level-trans-inf-orbits} is false in general.
\end{remark}

\begin{proposition}
\label{p:smg-level-trans}
(cf.~\cite[Lemma~2.1]{vorobets-free-2007})
Let $T = (V_n,\pi_n)_{n \in \N}$ be a rooted tree.
Let $\Sigma \subset \Aut(T)$ and denote by $S$ (resp.~$M$, resp.~$G$)
the subsemigroup (resp.~submonoid, resp.~subgroup) 
of $\Aut(T)$  generated by $\Sigma$.
Then the following conditions are equivalent:
\begin{enumerate}[\rm (a)]
\item 
$S$ is level-transitive;
\item
 $M$ is level-transitive;
\item 
$G$ is level-transitive.
\end{enumerate}
\end{proposition}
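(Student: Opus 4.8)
The plan is to exploit the inclusions $S \subseteq M \subseteq G$ of subsemigroups of $\Aut(T)$. Since the $S$-orbit of any vertex is contained in its $M$-orbit, which in turn is contained in its $G$-orbit, transitivity of the action on each $V_n$ is inherited when passing from $S$ to $M$ to $G$. This yields the implications (a) $\implies$ (b) $\implies$ (c) at once, so the entire content of the proposition lies in the reverse implication (c) $\implies$ (a).

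For the latter I would pass to the finite symmetric groups $\Sym(V_n)$. For each $n \in \N$, the level-$n$ action $f \mapsto f_n$ defines a monoid homomorphism $\phi_n \colon \End(T) \to \Map(V_n)$ which carries $\Aut(T)$ into $\Sym(V_n)$. As $\phi_n$ is a homomorphism and $S$, $M$, $G$ are respectively the subsemigroup, submonoid, and subgroup of $\Aut(T)$ generated by $\Sigma$, their images $\phi_n(S)$, $\phi_n(M)$, $\phi_n(G)$ are respectively the subsemigroup, submonoid, and subgroup of the finite group $\Sym(V_n)$ generated by $\phi_n(\Sigma)$. The crux is that these three subsets of $\Sym(V_n)$ all coincide; by the inclusions above it suffices to prove $\phi_n(S) = \phi_n(G)$.

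This rests on the elementary fact that a subsemigroup of a finite group is automatically a subgroup: if $g$ lies in the finite group $\Sym(V_n)$, it has some finite order $d \geq 1$, whence $g^{-1} = g^{d-1}$ is a positive power of $g$ and thus belongs to any subsemigroup containing $g$. Applying this to each generator $\phi_n(\sigma)$ shows that $\phi_n(S)$ contains the inverse of every generator; being closed under products, it then contains the full subgroup $\phi_n(G)$, while the reverse inclusion $\phi_n(S) \subseteq \phi_n(G)$ is trivial. Hence $\phi_n(S) = \phi_n(G)$ for every $n$.

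Finally, since the $S$-orbit of a vertex $v \in V_n$ equals $\{g(v) : g \in \phi_n(S)\}$, and likewise for $G$, the equality $\phi_n(S) = \phi_n(G)$ for all $n$ shows that $S$ acts transitively on $V_n$ exactly when $G$ does; thus $S$ is level-transitive if and only if $G$ is, which is (c) $\implies$ (a) and closes the cycle. I do not expect a genuine obstacle here: the single substantive point is the finiteness of each $V_n$---and hence of $\Sym(V_n)$---which is precisely what forces the generated subsemigroup and the generated subgroup to agree.
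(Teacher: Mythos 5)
Your proof is correct and follows essentially the same route as the paper's: both pass to the finite group $\Sym(V_n)$, where inverses of generators are positive powers, so that the subsemigroup and the subgroup generated by the images of $\Sigma$ coincide and hence act identically on each level. The paper packages this as an explicit word substitution ($\sigma_i^{\epsilon_i} \mapsto \sigma_i^{N+\epsilon_i}$ with $N = |V_n|!$, producing a concrete element of $S$ matching $g$ on $V_n$), whereas you phrase it as the equality $\phi_n(S) = \phi_n(G)$ of images under the level-$n$ homomorphism; these are the same argument in two presentations.
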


\begin{proof}
The implications (a) $\implies$ (b) and  (b) $\implies$ (c) are trivial since $S \subset M \subset G$.
Let us show the  implication (c) $\implies$ (a).
Suppose that $G$ is level-transitive. 
Fix $n \geq 1$ and let $u, v \in V_n$.
By our hypothesis, we can find $g \in G$ satisfying $g(u) = v$.
As $\Sigma$ generates $G$ as a group, 
there exist an integer $\ell \geq 1$, $\sigma_1, \ldots, \sigma_\ell \in \Sigma$, and $\epsilon_1, \ldots, \epsilon_\ell \in \{-1,1\}$ such that $g = \sigma_1^{\epsilon_1} \cdots \sigma_\ell^{\epsilon_\ell}$.
Setting $N \coloneqq |\Sym(V_n)| = |V_n|!$, we have
$\pi^N = \Id_{V_n}$ for all $\pi \in \Sym(V_n)$.
Consequently, the element $s \coloneqq \sigma_1^{N+\epsilon_1} \cdots \sigma_\ell^{N+\epsilon_\ell} \in S$ satisfies
$s(w) = g(w)$ for all $w \in V_n$.
In particular, we have  $s(u) = g(u) = v$.
This shows that $S$ is level-transitive.
\end{proof}

\begin{example}
\label{ex:automaton-semigroups}
Automaton semigroups (resp.~monoids, resp.~groups), i.e., semigroups (resp.~monoids, resp.~groups)
generated by Mealy machines, yield examples of finitely generated subsemigroups (resp.~submonoids, resp.~subgroups) of $\End(T_k)$.
\par
A \emph{Mealy machine} is a quadruple ${\mathcal M} = (Q, A,\tau, \sigma)$, where
\begin{itemize}
\item $Q$ is a finite set;
\item $A \coloneqq \{0,1, \ldots, k-1\}$ for somme integer $k \geq 2$;
\item $\tau \colon Q \times A \to Q$ is a map; 
\item $\sigma \colon Q \to \Map(A)$ is a map, also written $q \mapsto \sigma_q$.
\end{itemize}
The sets $Q$ and $A$ are respectively called the \emph{set of states} and the \emph{alphabet}, while  $\tau$ and $\sigma$ are respectively called the \emph{transition map} and the \emph{output map} of the machine $\MM$.
Given a Mealy machine ${\mathcal M} = (Q, A,\tau, \sigma)$, one recursively defines
$F_q = F_q^{\mathcal M} = (f_n)_{n \in \N} \in \End(T_k)$, $q \in Q$, by setting $f_0 \coloneqq \Id_{A^0}$ and
\[
f_{n}(a_1,a_2, \ldots, a_n) \coloneqq (\sigma_q(a_1), f'_{n-1}(a_2,\ldots,a_{n})),
\]
where $(f'_n)_{n \in \N} \coloneqq F_{q'} \in \End(T_k)$ and $q' \coloneqq \tau(q,a_1) \in Q$,
for all $(a_1,\ldots,a_n) \in A^n$ and $n \geq 1$.
Then the semigroup $S({\mathcal M}) \subset \End(T_k)$ (resp.\ monoid $M({\mathcal M}) \subset \End(T_k)$) generated by the maps
$F_q^{\mathcal M}$, $q \in Q$, is called the \emph{automaton semigroup} (resp.\ \emph{automaton monoid}) generated by ${\mathcal M}$.
\par
One says that the Mealy machine $\MM$ is \emph{invertible} if  the output map satisfies $\sigma_q \in \Sym(A)$ for all $q \in Q$.
If $\MM$ is invertible then  $F_q = F_q^{\mathcal M} \in \Aut(T_k)$ for all $q \in Q$, and the group $G({\mathcal M}) \subset \Aut(T_k)$
they generate is called the \emph{automaton group} generated by ${\mathcal M}$ (see e.g.~\cite{nekrashevych}, \cite[Exercise~6.60]{csc-ecag}).
We refer the reader to~\cite{nekrashevych} and the references therein for a list of important examples of automaton groups.
The list includes in particular the \emph{Grigorchuk group}~\cite[Example~2.4.65]{nekrashevych},
the \emph{Basilica group} \cite[Example~5.2.20]{nekrashevych},
and the \emph{Hanoi Towers groups} $H(k)$ \cite[Example~2.4.61]{nekrashevych}.  
\par
Proposition~\ref{p:exp-on-boundary-tree} and Theorem~\ref{t:boundary-tree} apply to all automaton monoids and all automaton groups
showing that their action on their tree boundary is not expansive and has the pseudo-orbit tracing property.
\end{example}

\begin{theorem}
\label{t:end-arbre-non-topstab}
Let $k \geq 2$ be an integer  and let $S$ be a subsemigroup (resp.~a submonoid)  of $\End(T_k)$.
Suppose that every orbit of the action of $S$ on $\partial T_k$ is infinite (e.g., $S$ is level-transitive).
Then the dynamical system $(\partial T_k,S)$ is not topologically semistable
(resp.~not topologically stable).
\end{theorem}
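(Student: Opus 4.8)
=== PROOF PROPOSAL ===

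\textbf{Setup and strategy.} The plan is to prove the contrapositive flavored statement directly by constructing, arbitrarily close to the shift action $\alpha$ of $S$ on $\partial T_k$, a perturbed action $\beta$ that admits no continuous semiconjugacy $h$ back to $\alpha$ (for the monoid case, no such $h$ that is also close to the identity). The key structural fact to exploit is Corollary~\ref{c:level-trans-inf-orbits} together with the hypothesis: every $S$-orbit in $\partial T_k$ is infinite. The obstruction to topological (semi)stability will come from choosing $\beta$ to have a fixed point or a finite orbit where $\alpha$ has none, so that any intertwining map $h$ would have to collapse an infinite $\alpha$-orbit onto a finite $\beta$-orbit image, which forces $h$ to be badly non-injective or to contradict the semiconjugacy relation.

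\textbf{Construction of the perturbation.} First I would fix a level $n$ large enough that the basic entourage $W_n \coloneqq \{(\xi,\eta) : \xi|_{\{0,\dots,n\}} = \eta|_{\{0,\dots,n\}}\}$ sits inside the prescribed neighborhood; by Remark~\ref{r:base-entour-prodiscrete} these $W_n$ form a base of entourages, so staying inside $W_n$ for the generators makes $\beta$ as close to $\alpha$ as required. Then I would build $\beta$ by modifying each generator only on vertices of level $> n$, leaving its action on $V_0,\dots,V_n$ untouched, so that $(\alpha_s(\xi),\beta_s(\xi)) \in W_n$ for all generators $s$ and all $\xi$. The concrete modification: arrange that $\beta$ fixes (or periodically permutes) a chosen ray $\xi_0$, e.g.\ by redefining the generators' action on the tail of $\xi_0$ to be the identity past level $n$. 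The point is that the deep-level alteration is invisible at the entourage scale $W_n$ but creates a genuinely new recurrence structure, namely a finite $\beta$-orbit, contradicting the ``all orbits infinite'' property of $\alpha$.

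\textbf{Deriving the contradiction.} Suppose for contradiction a continuous $h \colon \partial T_k \to \partial T_k$ satisfies $\alpha_s \circ h = h \circ \beta_s$ for all $s \in S$. Apply this to the point $\xi_0$ whose $\beta$-orbit $\{\beta_s(\xi_0) : s \in S\}$ is finite by construction. Then the $\alpha$-orbit of $h(\xi_0)$ equals $\{h(\beta_s(\xi_0)) : s \in S\} = h(\{\beta_s(\xi_0)\})$, which is the image of a finite set, hence finite. But by hypothesis every $\alpha$-orbit in $\partial T_k$ is infinite --- this is exactly where Corollary~\ref{c:level-trans-inf-orbits} (or the bare hypothesis) bites. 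This contradiction rules out topological semistability in the semigroup case. For the monoid case, I would additionally require $h$ close to the identity (using Definition~\ref{def:top-sta}); taking $s = 1_M$ gives $h(\xi_0) = h(\beta_{1_M}(\xi_0))$ and the near-identity constraint pins $h(\xi_0)$ near $\xi_0$, while the finite-orbit argument still forces $h(\xi_0)$ into an $\alpha$-periodic point, so I combine both to contradict topological stability.

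\textbf{Anticipated obstacle.} The main difficulty is verifying that the perturbed generators still assemble into a genuine \emph{action}, i.e.\ that the modified maps respect the tree structure ($\pi_n \circ f_{n+1} = f_n \circ \pi_n$) and that the semigroup (resp.\ monoid) relations among generators are not broken by the deep-level surgery. Since $S$ is specified only as an abstract subsemigroup of $\End(T_k)$, the cleanest route is probably \emph{not} to perturb abstract generators but to define $\beta$ by post-composing the boundary maps with a fixed continuous $S$-equivariant-friendly twist supported past level $n$, or to replace the whole action by a topologically conjugate model where the surgery is transparent; making this rigorous while keeping $\beta$ inside $\CSA_S(\partial T_k)$ (which is closed, by the discussion preceding Definition~\ref{def:wts}) is the step I expect to require the most care.
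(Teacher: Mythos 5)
Your contradiction mechanism is exactly the paper's: perturb $\alpha$ to an action $\beta$, still inside the given neighborhood, whose orbits are finite, and then observe that any map $h$ with $\alpha_s \circ h = h \circ \beta_s$ would send a finite $\beta$-orbit onto a full $\alpha$-orbit, contradicting the hypothesis that all $\alpha$-orbits are infinite. (Note, by the way, that this argument needs no continuity of $h$ and no closeness to the identity, so it kills semistability and stability simultaneously; your extra discussion of the near-identity constraint in the monoid case is superfluous.) The genuine gap is in the construction of $\beta$, which is the heart of the proof, and your proposal both misidentifies the right surgery and leaves unresolved precisely the point you flag as the ``anticipated obstacle.'' First, your concrete modification is inconsistent: if you keep a generator $f=(f_m)_{m\in\N}$ unchanged on levels $\leq n$ but declare it to be the identity on the tail of a ray $\xi_0$ past level $n$, the compatibility condition $\pi_m \circ f_{m+1} = f_m \circ \pi_m$ propagates upward and forces $f_n(\xi_0(n)) = \xi_0(n)$, which is false in general; so the modified sequence is not even a tree endomorphism. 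Second, and more seriously, modifying generators one at a time cannot work for an abstract subsemigroup $S$: you have no presentation of $S$, relations among generators would be broken, and $S$ need not be finitely generated in the first place. Your fallback suggestions do not repair this: post-composition $\beta_s = \tau \circ \alpha_s$ with a fixed ``twist'' $\tau$ is a semigroup action only under strong conditions (e.g.\ $\tau$ idempotent and commuting with every $\alpha_s$), which a deep-level twist will not satisfy for an arbitrary $S$, and indeed the paper's perturbation is \emph{not} of this form.

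The missing idea in the paper is to perturb not the generators of $S$ but the ambient monoid $\End(T_k)$ itself, by the truncation map $\rho \colon \End(T_k) \to \End(T_k)$, $f \mapsto \overline{f}$, where $\overline{f}$ agrees with $f$ up to level $n_0$ and copies the input coordinates unchanged below that level:
\[
\overline{f}_n(a_1,\ldots,a_{n_0},a_{n_0+1},\ldots,a_n) \coloneqq \bigl(f_{n_0}(a_1,\ldots,a_{n_0}),\, a_{n_0+1},\ldots,a_n\bigr), \qquad n > n_0 .
\]
The two computations that make the proof work are that $\overline{f}$ is again a tree endomorphism and, crucially, that $\rho$ is a \emph{monoid endomorphism}: $\overline{fg} = \overline{f}\,\overline{g}$ and $\rho(1) = 1$. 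Setting $\beta_s \coloneqq \partial\rho(s)$ for $s \in S$ then automatically yields a genuine continuous semigroup (resp.\ monoid) action --- no generators, relations, or finite-generation assumptions enter at all --- which lies in the prescribed neighborhood because $\rho(s)$ agrees with $s$ at level $n_0$. Finally, $\rho(\End(T_k))$ is finite (an element of the image is determined by its level-$n_0$ map, giving at most $k^{n_0 k^{n_0}}$ elements), so \emph{every} $\beta$-orbit is finite, and your contradiction applies. Without this homomorphism property (or some substitute for it), your proof does not go through.
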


\begin{proof}
Denote by $\alpha \in \CSA_S(\partial T_k)$ (resp.~$\alpha \in \CMA_S(\partial T_k)$) the action of $S$ on $\partial T_k$.
We shall show the following strong negation of topological semistability (resp.~topological stability): 
for every neighborhood $N$ of $\alpha$ in $\CSA_S(\partial T_k)$ (resp.~$\CMA_S(\partial T_k)$),
there exists $\beta \in N$ such that there is no map $h \colon \partial T_k \to \partial T_k$ satisfying
\begin{equation}
\label{e:alpha-f-beta}
\alpha_s \circ h = h \circ \beta_s \quad \text{for all } s \in S.
\end{equation}
Suppose that  $N$ is a neighborhood of $\alpha$ in $\CSA_S(\partial T_k)$ (resp.~$\CMA_S(\partial T_k)$). 
Then there exist $n_0 \in \N$ and a finite subset $K \subset S$ such that
\[
\{\gamma \in \CSA_S(\partial T_k)  \text{ (resp.~$\CMA_S(\partial T_k)$)}: \gamma_s(\xi)(n_0) = \alpha_s(\xi)(n_0) \text{ for all } s \in K \text{ and } \xi \in \partial T_k  \} \subset N.
\]
For $f = (f_n)_{n \in \N} \in \End(T_k)$, consider the sequence  $\overline{f} \coloneqq  (\overline{f}_n)_{n \in \N}$,
where $\overline{f}_n \colon A^{n} \to A^n$ is defined by 
$\overline{f}_n \coloneqq f_n$ if $n \leq n_0$, and
\[
\overline{f}_n(a_1,\ldots,a_{n_0},a_{n_0+1}, \ldots, a_n) \coloneqq (f_{n_0}(a_1,\ldots,a_{n_0}), a_{n_0+1}, \ldots, a_n)
\]
for all $(a_1,\ldots,a_n) \in A^n$ if $n > n_0$.
\par
For $n < n_0$, we have $\pi_n \circ \overline{f}_{n+1}= \pi_n \circ f_{n+1} = f_n \circ \pi_n = \overline{f}_{n} \circ \pi_n$.
For $n \geq n_0$, we have
\[
\begin{split}
\pi_n(\overline{f}_{n+1}(a_1,\ldots,a_{n},a_{n+1}))
& = \pi_n(f_{n_0}(a_1, \ldots,a_{n_0}),a_{n_0+1}, \ldots, a_n, a_{n+1})\\
& = (f_{n_0}(a_1,\ldots,a_{n_0}),a_{n_0+1}, \ldots, a_n)\\
& = \overline{f}_{n}(a_1,\ldots,a_n)\\
& = \overline{f}_{n}(\pi_n(a_1,\ldots,a_{n+1})).
\end{split}
\]
 We deduce that $\pi_n \circ \overline{f}_{n+1} = \overline{f}_n \circ \pi_n$ for all $n \in \N$.
This shows that $\overline{f} \in \End(T_k)$.
\par
Consider now the map   $\rho \colon \End(T_k) \to \End(T_k)$ defined by 
$\rho(f) \coloneqq \overline{f}$ for all $f \in \End(T_k)$.
We claim that  $\rho$ is a monoid endomorphism of  $\End(T_k)$.
First observe that
$\rho(1_{\End(T_k)}) =
\rho((\Id_{V_n})_{n \in \N}) =
(\overline{\Id_{V_n}})_{n \in \N} =
(\Id_{V_n})_{n \in \N} = 1_{\End(T_k)}$.
Also, let $f = (f_n)_{n \in \N}, g = (g_n)_{n \in \N} \in \End(T_k)$.
For $n \leq n_0$, we have $\overline{(fg)}_n = (fg)_n = f_ng_n = \overline{f}_n\overline{g}_n$.
For $n > n_0$, we have
\[
\begin{split}
\overline{(fg)}_n(a_1,\ldots,a_{n_0},a_{n_0+1}, \ldots, a_n) & = ((fg)_{n_0}(a_1,\ldots,a_{n_0}), a_{n_0+1}, \ldots, a_n)\\
& = (f_{n_0}(g_{n_0}(a_1,\ldots,a_{n_0})), a_{n_0+1}, \ldots, a_n)\\
& = \overline{f}_{n_0}((g_{n_0}(a_1,\ldots,a_{n_0})), a_{n_0+1}, \ldots, a_n)\\
& = \overline{f}_{n_0}(\overline{g}_{n_0}(a_1,\ldots,a_{n_0}, a_{n_0+1}, \ldots, a_n)).
\end{split}
\]
This shows that $\rho(fg) = \overline{fg} = \overline{f} \cdot \overline{g} = \rho(f)\rho(g)$, and the claim follows.
\par
Observe that the image by $\rho$ of  $f \in \End(T_k)$  is entirely determined by $f_{n_0}$.
It follows that $\rho(\End(T_k))$ is finite with cardinality
\[
|\rho(\End(T_k))| \leq |A^{n_0}|^{|A^{n_0}|} = k^{n_0 k^{n_0}}.
\] 
\par
For $s \in S$, set 
$\beta_s(\xi) \coloneqq \rho(s) \xi = \partial \rho(s)(\xi)$ for all $\xi \in \partial T_k$.
As $\rho$ is a monoid endomorphism, this defines a continuous semigroup (resp.~monoid) action $\beta$ of $S$ on $\partial T_k$.
Moreover, by construction, we have that $\beta \in N$.
\par
Let $h \colon \partial T_k \to \partial T_k$ be a map satisfying~\eqref{e:alpha-f-beta} and let $\xi \in \partial T_k$.
Observe that its $\beta$-orbit $\{\beta_s(\xi) = \rho(s) \xi: s \in S\} \subset \partial T_k$ is finite since $\rho(\End(T_k))$ is finite.
 By~\eqref{e:alpha-f-beta}, this set equals the $\alpha$-orbit of $h(\xi)$, contradicting our assumptions.
This shows that the action of $S$ on $\partial T_k$ is not topologically semistable
(resp.~not topologically stable).
\end{proof}

\begin{example}
\label{ex:grigorchuk}
The class of \emph{branch groups}, which includes the Grigorchuk group and the Hanoi Towers groups, and, more generally, the class of \emph{weakly branch groups}, which contains the Basilica group, 
\cite[Section~2.4.9]{nekrashevych} yield examples of finitely generated level-transitive subgroups of $\Aut(T_k)$. 
It follows from Proposition~\ref{p:smg-level-trans} that if $\Sigma$ is a generating subset of a weakly branch group $G \subset \Aut(T_k)$ then  the semigroup $S$ (resp.~monoid $M$) generated by $\Sigma$ is a level-transitive subsemigroup (resp.~submonoid) of $\End(T_k)$.
We deduce from Theorem~\ref{t:end-arbre-non-topstab} that the corresponding dynamical system $(\partial T_k,S)$  (resp.~ $(\partial T_k,M)$, resp.~ $(\partial T_k,G)$) is not topologically semistable (resp.~not topologically stable).
\end{example}

\def\cprime{$'$}


\begin{thebibliography}{10}

\bibitem{aoki-hidaire-book}
{\sc N.~Aoki and K.~Hiraide}, {\em Topological theory of dynamical systems},
  vol.~52 of North-Holland Mathematical Library, North-Holland Publishing Co.,
  Amsterdam, 1994.
\newblock Recent advances.

\bibitem{bourbaki-top-gen}
{\sc N.~Bourbaki}, {\em \'{E}l\'ements de math\'ematique. {T}opologie
  g\'en\'erale. {C}hapitres 1 \`a 4}, Hermann, Paris, 1971.

\bibitem{bowen-omega-limit-sets}
{\sc R.~Bowen}, {\em {$\omega $}-limit sets for axiom {${\rm A}$}
  diffeomorphisms}, J. Differential Equations, 18 (1975), pp.~333--339.

\bibitem{bowen-lns-1978}
\leavevmode\vrule height 2pt depth -1.6pt width 23pt, {\em On {A}xiom {A}
  diffeomorphisms}, American Mathematical Society, Providence, R.I., 1978.
\newblock Regional Conference Series in Mathematics, No. 35.

\bibitem{brin-stuck}
{\sc M.~Brin and G.~Stuck}, {\em Introduction to dynamical systems}, Cambridge
  University Press, Cambridge, 2002.
	
\bibitem{bucki}
{\sc D.~Bucki}, {\em On the stability and shadowing of tree-shifts of finite type}, Proc. Amer. Math. Soc., {152} 
(2024), pp.~3509--3520.

\bibitem{csc-ecag}
{\sc T.~Ceccherini-Silberstein and M.~Coornaert}, {\em Exercises in cellular
  automata and groups}, Springer Monographs in Mathematics, Springer, Cham,
  [2023] \copyright 2023.
\newblock With a foreword by Rostislav I. Grigorchuk.

\bibitem{chung-lee}
{\sc N.-P. Chung and K.~Lee}, {\em Topological stability and pseudo-orbit
  tracing property of group actions}, Proc. Amer. Math. Soc., 146 (2018),
  pp.~1047--1057.

\bibitem{clifford-preston}
{\sc A.~H. Clifford and G.~B. Preston}, {\em The algebraic theory of
  semigroups. {V}ol. {I}}, vol.~No. 7 of Mathematical Surveys, American
  Mathematical Society, Providence, RI, 1961.

\bibitem{coo-book}
{\sc M.~Coornaert}, {\em Topological dimension and dynamical systems},
  Universitext, Springer, Cham, 2015.
\newblock Translated and revised from the 2005 French original.

\bibitem{fletcher-sawyer}
{\sc P.~Fletcher and J.~Sawyer}, {\em Incompressible topological spaces},
  Glasnik Mat. Ser. III, 4(24) (1969), pp.~299--302.

\bibitem{kelley}
{\sc J.~L. Kelley}, {\em General topology}, Springer-Verlag, New York-Berlin,
  1975.
\newblock Reprint of the 1955 edition [Van Nostrand, Toronto, Ont.], Graduate
  Texts in Mathematics, No. 27.

\bibitem{meyerovitch-2019}
{\sc T.~Meyerovitch}, {\em Pseudo-orbit tracing and algebraic actions of
  countable amenable groups}, Ergodic Theory Dynam. Systems, 39 (2019),
  pp.~2570--2591.

\bibitem{nekrashevych}
{\sc V.~Nekrashevych}, {\em Groups and topological dynamics}, vol.~223 of
  Graduate Studies in Mathematics, American Mathematical Society, Providence,
  RI, [2022] \copyright 2022.

\bibitem{oprocha-2008}
{\sc P.~Oprocha}, {\em Shadowing in multi-dimensional shift spaces}, Colloq.
  Math., 110 (2008), pp.~451--460.

\bibitem{osipov-2013}
{\sc A.~V. Osipov and S.~B. Tikhomirov}, {\em Shadowing for actions of some
  finitely generated groups}, Dyn. Syst., 29 (2014), pp.~337--351.

\bibitem{vorobets-free-2007}
{\sc M.~Vorobets and Y.~Vorobets}, {\em On a free group of transformations
  defined by an automaton}, Geom. Dedicata, 124 (2007), pp.~237--249.

\bibitem{walters-anosov}
{\sc P.~Walters}, {\em Anosov diffeomorphisms are topologically stable},
  Topology, 9 (1970), pp.~71--78.

\bibitem{walters-potp-1978}
{\sc P.~Walters}, {\em On the pseudo-orbit tracing property and its
  relationship to stability}, in The structure of attractors in dynamical
  systems ({P}roc. {C}onf., {N}orth {D}akota {S}tate {U}niv., {F}argo,
  {N}.{D}., 1977), vol.~668 of Lecture Notes in Math., Springer, Berlin-New
  York, 1978, pp.~231--244.

\end{thebibliography}
\end{document}